\newtheorem{Thm}{Theorem}[section]
\newtheorem{Lemma}[Thm]{Lemma}
\theoremstyle{definition}
\newtheorem{Def}[Thm]{Definition}
\newtheorem{Rmk}[Thm]{Remark}
\newtheorem{Example}[Thm]{Example}
\newtheorem*{question}{Question}
\numberwithin{equation}{section}
\def\bfe{\mathbf{e}}
\def\frakG{\mathfrak{G}}
\def\bbf{\mathbb{F}}
\def\bbr{\mathbb{R}}
\def\bbs{\mathbb{S}}
\def\bbz{\mathbb{Z}}
\def\calC{\mathcal{C}}
\def\calR{\mathcal{R}}
\def\lra{\longrightarrow}
\def\bs{\backslash}
\def\aff{\mathrm{Aff}}
\def\fix{\mathrm{Fix}}
\def\gfix{\mathrm{fix}}
\def\im{\mathrm{im}}
\def\id{\mathrm{id}}
\def\ind{\mathrm{ind}}
\def\Coin{\mathrm{Coin}}
\def\coin{\mathrm{coin}}
\DeclareMathOperator{\Fix}{Fix}
\def\boxit#1{\vbox{\hrule\hbox{\vrule\kern3pt
     \vbox{\kern3pt#1\kern3pt}\kern3pt\vrule}\hrule}}
\def\colr#1{\textcolor{red}{#1}}
\def\colb#1{\textcolor{blue}{#1}}
\begin{document}

\title{An averaging formula for the coincidence Reidemeister trace}
\author{Jong Bum Lee}
\address{Department of Mathematics, Sogang University, Seoul 04107, KOREA}
\email{jlee@sogang.ac.kr}

\author{P. Christopher Staecker}
\address{Department of Mathematics, Fairfield University, Fairfield CT, 06824, USA}
\email{cstaecker@fairfield.edu}

\begin{abstract}
In the setting of continuous maps between compact orientable manifolds of the same dimension, there is a well known averaging formula for the coincidence Lefschetz number in terms of the Lefschetz numbers of lifts to some finite covering space. We state and prove an analogous averaging formula for the coincidence Reidemeister trace. This generalizes a recent formula in fixed point theory by Liu and Zhao.

We give two separate and independent proofs of our main result: one using methods developed by Kim and the first author for averaging Nielsen numbers, and one using an axiomatic approach for the local  Reidemeister trace.  

We also give some examples and state some open questions for the nonorientable case. 
\end{abstract}

\subjclass[2010]{54H25, 55M20}
\keywords{Averaging formula, coincidence point, fixed point, Nielsen theory, Reidemeister trace}
\thanks{The first author is supported by Basic Science Researcher Program through the National Research Foundation of Korea (NRF-2016R1D1A1B01006971).}

\maketitle
\tableofcontents

\section{The Reidemeister trace in fixed point theory}

Let $X$ be a compact polyhedron with the universal covering projection
$p:\tilde{X}\to X$ and let $\Pi$ be the group of all covering transformations of $p$.
Let $f:X\to X$ be a self-map and $\tilde{f}:\tilde{X}\to\tilde{X}$
be a lift of $f$.
We define an endomorphism $\phi:\Pi\to\Pi$ given by the rule
\begin{align}\label{endo phi}
\phi(\alpha)\tilde{f}=\tilde{f}\alpha,\quad\forall\alpha\in\Pi.
\end{align}

Let $\calR[\phi]$ denote the set of Reidemeister classes of $\phi$, defined by the twisted conjugacy relation $\alpha \sim \beta$ if and only if there is some $\gamma$ with $\alpha = \gamma^{-1} \beta \phi(\gamma)$. We write $[\alpha]_\phi$ for the Reidmeister class of $\alpha$. When the $\phi$ is understood, we simply write $[\alpha]$.

We can consider the free Abelian group $\bbz\calR[\phi]$ with generators given by the Reidemeister classes. There is a one-to-one correspondence between the Nielsen fixed point classes $\bbf$
and the Reidemeister classes $[\alpha]$. Thus the Nielsen classes $\bbf$ can be indexed by the Reidemeister classes $[\alpha]$; $\bbf=\bbf_{[\alpha]}$.

The Reidemeister number $R(f)=R(\phi)$ is the number of Reidemeister classes.
The Lefschetz number $L(f)$ defined using the homology groups 
is the sum of local indices, which is an important result obtained by Lefschetz. That is,
$$
L(f)=\sum_{[\alpha] \in\calR[\phi]}\ind(f,\bbf_{[\alpha]})\in\bbz.
$$

The Reidemeister trace of $f$ is defined as follows:
\begin{align}\label{trace}
RT(f,\tilde{f})=\sum_{[\alpha]\in\calR[\phi]}\ind(f,\bbf_{[\alpha]})[\alpha]\in\bbz\calR[\phi].
\end{align}
The Reidemeister trace contains information of the Nielsen number and the Lefschetz number.
Indeed, the number of non-zero terms in this sum is the Nielsen number $N(f)$,
and the sum of the coefficients is the Lefschetz number $L(f)$.

Let $\Gamma$ be a finite index normal subgroup of $\Pi$ which is $\phi$-invariant.
Letting $\bar\Pi=\Pi/\Gamma$, we have the following commutative diagram
$$
\CD
1@>>>\Gamma@>>>\Pi@>>>\bar\Pi@>>>1\\
@.@VV{\phi'}V@VV{\phi}V@VV{\bar\phi}V\\
1@>>>\Gamma@>>>\Pi@>>>\bar\Pi@>>>1
\endCD
$$

Let $\bar{X}=\Gamma\bs\tilde{X}$, $p':\tilde{X}\to\bar{X}$ and $\bar{p}:\bar{X}\to X$.
Then we have the following commutative diagram
$$
\CD
\tilde{X}@>{\alpha\tilde{f}}>>\tilde{X}\\
@VV{p'}V@VV{p'}V\\
\bar{X}@>{\bar\alpha\bar{f}}>>\bar{X}\\
@VV{\bar{p}}V@VV{\bar{p}}V\\
X@>{f}>>X
\endCD
$$
where $\alpha\in\Pi$ and $\bar\alpha$ is the image of $\alpha$ in $\bar\Pi$.
Note that $\{\alpha\tilde{f}\mid\alpha\in\Pi\}$ is the set of all lifts on $\tilde{X}$  of $f$,
and $\{\bar\alpha\bar{f}\mid\alpha\in\Pi\}$ is the set of all lifts on $\bar{X}$ of $f$.

It has been known for some time that in this setting the Lefschetz number obeys the following averaging formula:
\[ L(f) = \frac{1}{[\Pi:\Gamma]} \sum_{\bar \beta \in \bar \Pi} L(\bar \beta \bar f). \]
This formula is apparently due to Jiang, and appears in \cite{jiang} from 1983. The averaging formula is interesting in its own right, but also computationally useful. When the finite covering space $\bar X$ is topologically simpler than $X$ itself, the individual terms $L(\bar \beta\bar f)$ may be easy to compute even when $L(f)$ itself is hard to compute.

It is natural to ask if a similar averaging formula holds for the Nielsen number $N(f)$. This is the subject of \cite{KLL-Nagoya}, which shows that the corresponding formula for $N(f)$ does not hold in general. 

Recall from \eqref{endo phi} that the endomorphism $\phi$ was defined using the lift $\tilde{f}$.
Taking projection by $p'$ we have 
$$
\bar\phi(\bar\alpha)\bar{f}=\bar{f}\bar\alpha,\quad\forall\alpha\in\Pi.
$$
If we use other lifts $\alpha\tilde{f}$ and $\bar\alpha\bar{f}$ of $f$, then
we obtain the corresponding endomorphisms $\tau_\alpha\phi$ and $\tau_{\bar\alpha}\bar\phi$ respectively.
(Here, $\tau_\alpha$ is the conjugation by $\alpha$, $\beta\mapsto \alpha\beta\alpha^{-1}$.)
Similarly, the lift $\alpha\tilde{f}$ of $\bar\alpha\bar{f}:\bar{X}\to\bar{X}$
induces an endomorphism $\tau_{\alpha}\phi':\Gamma\to \Gamma$. Hence
$$
RT(\bar\alpha\bar{f},\alpha\tilde{f})=\sum_{[\gamma]_{\tau_\alpha\phi'}\in\calR[\tau_{\alpha}\phi']}
\ind(\bar\alpha\bar{f};\bbf_{[\gamma]_{\tau_\alpha\phi'}})[\gamma]_{\tau_\alpha\phi'},
$$
where $\ind$ denotes the fixed point index of a fixed point class.
Furthermore, we have for any $\alpha\in\Pi$,
$$
\CD
1@>>>\Gamma@>>>\Pi@>>>\bar\Pi@>>>1\\
@.@VV{\tau_{\alpha}\phi'}V@VV{\tau_{\alpha}\phi}V@VV{\tau_{\bar\alpha}\bar\phi}V\\
1@>>>\Gamma@>>>\Pi@>>>\bar\Pi@>>>1
\endCD
$$
The diagram induces the following short exact sequence of sets
\begin{equation}\label{calRexact}
\calR[\tau_\alpha\phi']\buildrel{\hat{i}^\alpha}\over\to\calR[\tau_\alpha\phi]\buildrel{\hat{u}^\alpha}\over\to\calR[\tau_{\bar\alpha}\bar\phi]\to1
\end{equation}
and the following sequence of groups
$$
1\to \gfix(\tau_\alpha\phi')\to\gfix(\tau_\alpha\phi)\buildrel{u_\alpha}\over\to\gfix(\tau_{\bar\alpha}\bar\phi),
$$
where $\gfix(\phi)$ denotes the fixed subgroup of the group homomorphism $\phi$.

This exactness is used in \cite{KLL-Nagoya} to give a convenient relabeling of the fixed point classes. Typically the fixed point classes of $f$ are labeled by the Reidemeister classes $\calR[\phi]$ of $\phi$. We may instead relabel them in terms of the Reidemeister classes $\calR[\bar\phi]$ and $\calR[\tau_\alpha \phi']$. This gives the following:
\begin{Lemma}[{\cite[Lemma 2.2]{KLL-Nagoya}}] \label{relabel}
For each $[\bar \beta] \in \calR[\bar\phi]$, choose some $[\beta] \in \calR[\phi]$ with $\hat u^1([\beta]) = [\bar \beta]$. Then we have the disjoint union:
\[ \Fix (f) =\bigsqcup_{[\bar \beta]_{\bar \phi} \in \calR[\bar \phi]}  \bigsqcup_{[\gamma]_{\tau_\beta \phi}\in \im(\hat i^\beta)} p(\Fix(\gamma\beta\tilde f)). \] 
\end{Lemma}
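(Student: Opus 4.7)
My plan is to start from the standard Nielsen decomposition of $\Fix(f)$ indexed by $\calR[\phi]$ and reorganize it using the exact sequence \eqref{calRexact}. The outer union in the claimed formula will come from grouping classes in $\calR[\phi]$ by the fibers of the surjection $\hat u^1\colon \calR[\phi]\to\calR[\bar\phi]$, and the inner union from identifying each such fiber with $\im(\hat i^\beta)$ via the chosen representative $[\beta]$.

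Concretely, I would begin from the classical decomposition $\Fix(f)=\bigsqcup_{[\alpha]_\phi\in\calR[\phi]} p(\Fix(\alpha\tilde f))$. Using surjectivity of $\hat u^1$ (which is part of the exactness of \eqref{calRexact}), this refines to a double disjoint union indexed by $[\bar\beta]\in\calR[\bar\phi]$ on the outside and $[\alpha]\in (\hat u^1)^{-1}([\bar\beta])$ on the inside.

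The crucial step is to identify each fiber $(\hat u^1)^{-1}([\bar\beta])$ with $\im(\hat i^\beta)$. I would check that right-multiplication by $\beta^{-1}$ induces a bijection $r_\beta\colon \calR[\phi]\to\calR[\tau_\beta\phi]$, $[\alpha]_\phi\mapsto [\alpha\beta^{-1}]_{\tau_\beta\phi}$. Well-definedness follows from a short computation: if $\alpha'=\eta^{-1}\alpha\phi(\eta)$, then using $\tau_\beta\phi(\eta)=\beta\phi(\eta)\beta^{-1}$ we get $\alpha'\beta^{-1}=\eta^{-1}(\alpha\beta^{-1})\tau_\beta\phi(\eta)$, and the inverse of $r_\beta$ is right-multiplication by $\beta$. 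The same argument applied to $\bar\Pi$ yields a parallel bijection $\bar r_\beta\colon \calR[\bar\phi]\to\calR[\tau_{\bar\beta}\bar\phi]$, and a direct check shows that $\hat u^\beta\circ r_\beta=\bar r_\beta\circ \hat u^1$. Since $\bar r_\beta([\bar\beta]_{\bar\phi})=[1]_{\tau_{\bar\beta}\bar\phi}$ is the trivial class, exactness of \eqref{calRexact} gives $r_\beta\bigl((\hat u^1)^{-1}([\bar\beta])\bigr)=(\hat u^\beta)^{-1}([1])=\im(\hat i^\beta)$.

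To finish, for each $[\gamma]_{\tau_\beta\phi}\in\im(\hat i^\beta)$ I pick a representative $\gamma\in\Gamma$, which is possible since $\hat i^\beta$ is induced by the inclusion $\Gamma\hookrightarrow\Pi$. The corresponding class in $(\hat u^1)^{-1}([\bar\beta])$ is $r_\beta^{-1}([\gamma]_{\tau_\beta\phi})=[\gamma\beta]_\phi$, with associated Nielsen class $p(\Fix(\gamma\beta\tilde f))$. Substitution yields the stated formula, and disjointness is inherited from the original Nielsen decomposition. The main technical point is verifying the intertwining identity $\hat u^\beta\circ r_\beta=\bar r_\beta\circ \hat u^1$ and checking that $p(\Fix(\gamma\beta\tilde f))$ does not depend on the choice of representative $\gamma$; both reduce to elementary manipulations with twisted conjugacy, so once the reindexing via $r_\beta$ is set up, the rest is essentially formal.
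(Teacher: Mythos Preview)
Your argument is correct. The paper does not actually prove this lemma; it is quoted verbatim from \cite[Lemma~2.2]{KLL-Nagoya} and used as a black box in the proof of Theorem~\ref{fixptrt}. Your proof supplies exactly the details the paper omits: the reindexing bijection $r_\beta=[\alpha]_\phi\mapsto[\alpha\beta^{-1}]_{\tau_\beta\phi}$ is precisely the inverse of the paper's $\rho_\beta$, and your identification of the fiber $(\hat u^1)^{-1}([\bar\beta])$ with $\im(\hat i^\beta)$ via $r_\beta$ and exactness of \eqref{calRexact} is the standard mechanism underlying the cited result. The intertwining relation $\hat u^\beta\circ r_\beta=\bar r_\beta\circ\hat u^1$ and the independence of $p(\Fix(\gamma\beta\tilde f))$ from the choice of representative are both routine, as you note, so the argument is complete.
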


\begin{Rmk}
It will be important for us to relate the quantities $RT(f,\tilde f) \in \calR[\phi]$ and $RT(f,\alpha \tilde f) \in \calR[\tau_\alpha \phi]$ for some $\alpha \in \Pi$. By \eqref{endo phi}, we have that
\[
RT(f,\alpha\tilde{f})=\sum_{[\beta]_{\tau_\alpha\phi}\in\calR[\tau_\alpha\phi]}\ind(f;\bbf_{[\beta]_{\tau_\alpha\phi}})[\beta]_{\tau_\alpha\phi}.
\]
In this sum, we note that 
\[ 
\bbf_{[\beta]_{\tau_\alpha\phi}}=p(\fix(\beta(\alpha\tilde{f})))=p(\fix((\beta\alpha)\tilde{f})) = \bbf_{[\beta\alpha]_\phi}
\]
and that multiplication on the right by $\alpha$ gives a one-to-one correspondence $\rho_\alpha: \calR[\tau_\alpha \phi] \to \calR[\phi]$ such that 
\[ \rho_\alpha([\beta]_{\tau_\alpha \phi}) = [\beta \alpha]_\phi. \]
Linearizing (and again using the notation $\rho_\alpha$) gives a one-to-one correspondence 
\[ \rho_\alpha: \bbz \calR[\tau_\alpha \phi] \longleftrightarrow \bbz\calR[\phi]. \]

Thus we have 
\begin{align*}
\rho_\alpha(RT(f,\alpha\tilde f)) 
&= \rho_\alpha \left( \sum_{[\beta]_{\tau_\alpha\phi}\in\calR[\tau_\alpha\phi]}\ind(f;\bbf_{[\beta]_{\tau_\alpha\phi}})[\beta]_{\tau_\alpha\phi} \right)\\
&= \sum_{[\beta\alpha]_{\phi}\in\calR[\phi]}\ind(f;\bbf_{[\beta\alpha]_{\phi}})[\beta\alpha]_{\phi} \\
&= \sum_{[\alpha]_{\phi}\in\calR[\phi]}\ind(f;\bbf_{[\alpha]_{\phi}})[\alpha]_{\phi} 
\end{align*}
and so $RT(f,\tilde f) = \rho_\alpha(RT(f,\alpha \tilde f))$. That is, $RT(f,\tilde f)$ and $RT(f,\alpha \tilde f)$ are the same, except all Reidmeister representatives are multiplied on the right by $\alpha$.

By the same reason, we have 
$$
RT(\bar\alpha\bar{f},\alpha\tilde{f})=\rho_\gamma (RT(\bar\alpha\bar{f},\gamma\alpha\tilde{f})),\quad
\forall\gamma\in \Gamma.
$$
\end{Rmk}

The following averaging formula for Reidemeister traces is the main result of \cite{LZ}. Because the sum of the coefficients in the Reidiemeister trace equals the Lefschetz number, this is a direct generalization of Jiang's averaging formula for $L(f)$. We will give another proof using  arguments from \cite{KLL-Nagoya}.
\begin{Thm}[{\cite[Theorem~4.1]{LZ}}]\label{fixptrt}
For a mapping $f:X\to X$, choose some specific lift $\bar f:\bar X \to \bar X$. Then we have
$$
RT(f,\tilde{f})=\frac{1}{[\Pi:\Gamma]}
\sum_{\bar\alpha\in\Pi/\Gamma} \rho_\alpha\circ\hat{i}^\alpha\left(RT(\bar\alpha\bar{f},\alpha\tilde{f})\right).
$$
\end{Thm}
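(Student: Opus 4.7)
The plan is to compare the coefficients of each Reidemeister class $[\eta]_\phi \in \calR[\phi]$ on the two sides of the formula. By \eqref{trace}, the left-hand side contributes $\ind(f,\bbf_{[\eta]_\phi})$. For the right-hand side, expanding each $RT(\bar\alpha\bar f,\alpha\tilde f)$ via \eqref{trace} and using that $\hat i^\alpha$ sends $[\gamma]_{\tau_\alpha\phi'}$ to $[\gamma]_{\tau_\alpha\phi}$ and $\rho_\alpha$ then sends this to $[\gamma\alpha]_\phi$, one obtains
\[
\rho_\alpha\circ\hat i^\alpha\bigl(RT(\bar\alpha\bar f,\alpha\tilde f)\bigr) \;=\; \sum_{[\gamma]_{\tau_\alpha\phi'}\in\calR[\tau_\alpha\phi']} \ind\bigl(\bar\alpha\bar f,\bbf_{[\gamma]_{\tau_\alpha\phi'}}\bigr)\,[\gamma\alpha]_\phi.
\]
Collecting contributions by the output class $[\eta]_\phi$, the theorem reduces to the index identity
\[
[\Pi:\Gamma]\cdot\ind(f,\bbf_{[\eta]_\phi}) \;=\; \sum_{\bar\alpha\in\bar\Pi}\;\sum_{\substack{[\gamma]_{\tau_\alpha\phi'} \\ [\gamma\alpha]_\phi=[\eta]_\phi}} \ind\bigl(\bar\alpha\bar f,\bbf_{[\gamma]_{\tau_\alpha\phi'}}\bigr)
\]
for every $[\eta]_\phi\in\calR[\phi]$.

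To prove this identity I would invoke three familiar properties of the covering $\bar p\colon\bar X\to X$. First, each fixed point $x$ of $f$ has exactly $[\Pi:\Gamma]$ preimages in $\bar X$, each of which is fixed by exactly one lift of $f$; equivalently, $\bar p^{-1}(\Fix f) = \bigsqcup_{\bar\alpha\in\bar\Pi}\Fix(\bar\alpha\bar f)$. Second, since $\bar p$ is a local homeomorphism, the local fixed point index is preserved: $\ind(\bar\alpha\bar f,\bar x) = \ind(f,\bar p(\bar x))$ at every fixed point $\bar x$ of every lift. Third, directly from the definitions, $\bar p\bigl(\bbf_{[\gamma]_{\tau_\alpha\phi'}}\bigr) = p(\Fix(\gamma\alpha\tilde f)) = \bbf_{[\gamma\alpha]_\phi}$, and since any two fixed point classes of $f$ are either equal or disjoint, the classes $\bbf_{[\gamma]_{\tau_\alpha\phi'}}$ indexed by pairs $(\bar\alpha,[\gamma]_{\tau_\alpha\phi'})$ satisfying $[\gamma\alpha]_\phi=[\eta]_\phi$ exhaust $\bar p^{-1}(\bbf_{[\eta]_\phi})$ without overlap. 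Adding indices across this decomposition counts each of the $[\Pi:\Gamma]$ preimages of every $x\in\bbf_{[\eta]_\phi}$ exactly once, contributing $\ind(f,x)$ apiece, which gives the right-hand side.

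The main obstacle I anticipate is verifying that the construction is independent of the choice of coset representative $\alpha\in\Pi$ for each $\bar\alpha\in\bar\Pi$, since both the map $\rho_\alpha\circ\hat i^\alpha$ and the label $[\gamma\alpha]_\phi$ change when $\alpha$ is replaced by some $\gamma\alpha$ with $\gamma\in\Gamma$. Well-definedness of the composite $\rho_\alpha\circ\hat i^\alpha\bigl(RT(\bar\alpha\bar f,\alpha\tilde f)\bigr)$ should follow from the equivariance $RT(\bar\alpha\bar f,\alpha\tilde f) = \rho_\gamma\bigl(RT(\bar\alpha\bar f,\gamma\alpha\tilde f)\bigr)$ recorded in the preceding remark, combined with the naturality of the exact sequence \eqref{calRexact} under the substitution $\alpha\mapsto\gamma\alpha$; indeed, the two substitutions must exactly cancel in order for the proposed sum to match the output of the fixed-point-class enumeration provided by Lemma \ref{relabel} applied to $f$. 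Once this compatibility is settled, the coefficient-by-coefficient comparison above completes the proof.
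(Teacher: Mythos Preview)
Your approach is correct and takes a genuinely different route from the paper's primary proof. The paper works ``top--down'': it starts from $RT(f,\tilde f)$, relabels the sum via Lemma~\ref{relabel}, and then invokes three specific results from \cite{KLL-Nagoya} to track multiplicities---Lemma~2.1 for the cardinality $\#(\hat i^\beta)^{-1}([\gamma])$, Remark~2.3 for the orbit--stabilizer identity $[\Pi:\Gamma]=\#[\bar\beta]\cdot\#\gfix(\tau_{\bar\beta}\bar\phi)$, and Lemma~2.5 for the covering multiplicity of one fixed point class over another---so that the sum can be reassembled as $\rho_\beta\circ\hat i^\beta(RT(\bar\beta\bar f,\beta\tilde f))$. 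Your argument instead compares coefficients of a fixed class $[\eta]_\phi$ on both sides and reduces everything to the single counting observation that $\bar p^{-1}(\bbf_{[\eta]_\phi})$ is tiled, without overlap, by the upstairs classes $\bbf_{[\gamma]_{\tau_\alpha\phi'}}$ with $[\gamma\alpha]_\phi=[\eta]_\phi$, so that summing their indices reproduces $[\Pi:\Gamma]\cdot\ind(f,\bbf_{[\eta]_\phi})$ by sheer preimage count. This is more elementary: it bypasses the explicit formulas for preimage sizes and the orbit--stabilizer bookkeeping, and is close in spirit to the paper's second (axiomatic) proof in Section~4, though you argue directly rather than through a uniqueness theorem. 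What the paper's first proof buys is that the intermediate structural information (the exact multiplicities) is made visible, which is what is reused in the coincidence generalization; your proof is shorter but more of a black box.

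Two small points you should make explicit when writing this up. First, the phrase ``contributing $\ind(f,x)$ apiece'' presumes isolated fixed points; you should say that one may homotope $f$ (and hence all lifts) to have finitely many fixed points, which is standard on a compact polyhedron, and that the Reidemeister trace is homotopy invariant. Second, your verification that the upstairs classes with $[\gamma\alpha]_\phi=[\eta]_\phi$ genuinely partition $\bar p^{-1}(\bbf_{[\eta]_\phi})$ is the heart of the argument and deserves a full sentence: disjointness across different $\bar\alpha$ holds because distinct lifts have disjoint fixed point sets, disjointness within a single $\bar\alpha$ is by definition of fixed point classes, and exhaustion follows since any $\bar x\in\bar p^{-1}(\bbf_{[\eta]_\phi})$ is fixed by a unique lift and its class projects into $\bbf_{[\eta]_\phi}$, forcing $[\gamma\alpha]_\phi=[\eta]_\phi$. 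Your handling of the well-definedness in $\alpha$ is adequate.
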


\begin{proof}
Our proof follows the results and methods from \cite{KLL-Nagoya}. 
For brevity in the following, we will omit subscripts on Reidemeister classes when they are obvious from context. For example $[\gamma]_{\tau_\beta \phi} \in \calR[\tau_\beta\phi]$ will simply be written  $[\gamma] \in \calR[\tau_\beta\phi]$.

Recall that
\[
RT(f,\tilde{f}) =\sum_{[\alpha]\in\calR[\phi]}\ind(f;\bbf_{[\alpha]})[\alpha]\  
=\sum_{[\alpha]\in\calR[\phi]}\ind(f;p(\fix(\alpha\tilde{f}))[\alpha]_\phi
\]
by the definition of $RT$ and $\bbf_{[\alpha]}$. Relabeling classes as in Lemma \ref{relabel}, for each $[\bar \beta] \in \calR[\bar \phi]$ we fix a preimage $[\beta]\in \calR[\phi]$, and the above becomes:
\begin{align*}
RT(f,\tilde{f})=\sum_{[\bar\beta] \in\calR[\bar\phi]} \left( \sum_{[\gamma] \in\im(\hat{i}^\beta)}\ind(f;p(\fix(\gamma\beta\tilde{f})) 
[\gamma\beta]_\phi \right).
\end{align*}
Rather than the inner sum over elements $[\gamma]$ of $\im(\hat i^\beta)$, we can sum over elements $[\gamma']$ of $\calR[\tau_\beta\phi']$ with $\hat i^\beta([\gamma']) = [\gamma]$. Since   a single $\gamma \in \im(\hat i^\beta)$ may come from several elements in $\calR[\tau_\beta\phi']$, we must insert the fraction below. Letting $F_{\gamma\beta} = p(\fix(\gamma\beta\tilde f))$, we obtain:
\[
RT(f,\tilde{f})=\sum_{[\bar\beta] \in\calR[\bar\phi]} \left( \sum_{[\gamma']\in \calR[\tau_\beta\phi']} \frac{1}{\#(\hat i^\beta)^{-1}([\gamma])} \ind(f;F_{\beta\gamma})
[\gamma\beta]_\phi \right).
\]

By \cite[Lemma 2.1]{KLL-Nagoya}, we also have
$$
\#(\hat i^\beta)^{-1}([\gamma]) = 
[\gfix(\tau_{\bar\beta}\bar\phi):u_{\gamma\beta}(\gfix(\tau_{\gamma\beta}\phi))],
$$
and thus we can rewrite the above as
\begin{align*}
RT(f,\tilde{f})&=\sum_{[\bar\beta] \in\calR[\bar\phi]}\left(\sum_{[\gamma'] \in\calR[\tau_\beta\phi']}
\frac{1}{[\gfix(\tau_{\bar\beta}\bar\phi):u_{\gamma\beta}(\gfix(\tau_{\gamma\beta}\phi))]}
\ind({f};F_{\beta\gamma})[\gamma\beta]_\phi \right).
\end{align*}
Furthermore, we have
\begin{align*}
RT(f,\tilde{f})&=\sum_{\bar\beta\in\Pi/\Gamma}\frac{1}{\#[\bar\beta]}\left(\sum_{[\gamma']\in\calR[\tau_\beta\phi']}
\frac{1}{[\gfix(\tau_{\bar\beta}\bar\phi):u_{\gamma\beta}(\gfix(\tau_{\gamma\beta}\phi))]}
\ind({f};F_{\beta\gamma})[\gamma\beta]_\phi \right)\\
&=\sum_{\bar\beta\in\Pi/\Gamma}\left(\sum_{[\gamma']\in\calR[\tau_\beta\phi']}
\frac{\#u_{\gamma\beta}(\gfix(\tau_{\gamma\beta}\phi))}{\#[\bar\beta]\cdot\#\gfix(\tau_{\bar\beta}\bar\phi)}
\ind({f};F_{\beta\gamma})[\gamma\beta]_\phi \right).
\end{align*}
By \cite[Remark~2.3]{KLL-Nagoya}, we have
$$
[\Pi:\Gamma]=\#[\bar\beta]\cdot\#\gfix(\tau_{\bar\beta}\bar\phi), \ \forall\beta\in\Pi.
$$
Hence
\begin{align*}
RT(f,\tilde{f})&=\frac{1}{[\Pi:\Gamma]}\sum_{\bar\beta\in\Pi/\Gamma}\left(\sum_{[\gamma']\in\calR[\tau_\beta\phi']}
{\#u_{\gamma\beta}(\gfix(\tau_{\gamma\beta}\phi))}
\ind({f};F_{\beta\gamma})[\gamma\beta]_\phi \right).
\end{align*}

Now we remark that the fixed point class $p'(\fix(\gamma\beta\tilde{f}))$ of $\bar\beta\bar{f}$ 
covers by $\bar{p}$ the fixed point class $p(\fix(\gamma\beta\tilde{f}))$ of $f$. 
By \cite[Lemma~2.5]{KLL-Nagoya}, we see that each fixed point of $p(\fix(\gamma\beta\tilde{f}))$ is covered by $\#u_{\gamma\beta}(\gfix(\tau_{\gamma\beta}\phi))$ fixed points of $p(\fix(\gamma\beta\tilde{f}))$. Since the projections $p'$ and $p$ are local homeomorphisms and the index is a local invariant, we have 
\[ \ind(\beta\bar f; p'(\Fix(\gamma\beta\tilde f))) = \#u_{\gamma\beta}(\gfix(\tau_{\gamma\beta}\phi)) \cdot \ind(f;F_{\beta\gamma}), \]
and consequently, we obtain
\begin{align*}
RT(f,\tilde{f})&=\frac{1}{[\Pi:\Gamma]}\sum_{\bar\beta\in\Pi/\Gamma}\left(\sum_{[\gamma']\in\calR[\tau_\beta\phi']}
\ind(\bar\beta\bar{f};p'(\fix(\gamma\beta\tilde{f}))[\gamma\beta]_\phi\  \right).
\end{align*}

We know that $[\gamma\beta]_\phi = \rho_\beta([\gamma]_{\tau_\beta\phi})$, and by the definition of $\gamma'$ we have $[\gamma]_{\tau_\beta\phi} = \hat i^\beta([\gamma']_{\tau_\beta \phi'})$, and thus the above gives:
\begin{align*}
RT(f,\tilde{f})&=\frac{1}{[\Pi:\Gamma]}\sum_{\bar\beta\in\Pi/\Gamma}\left(\sum_{[\gamma']\in\calR[\tau_\beta\phi']}
\ind(\bar\beta\bar{f};p'(\fix(\gamma\beta\tilde{f}))) \rho_\beta(\hat i^\beta([\gamma']_{\tau_\beta\phi'}  )) \right) \\
&=\frac{1}{[\Pi:\Gamma]}\sum_{\bar\beta\in\Pi/\Gamma}\rho_\beta\circ\hat{i}^\beta\left(\sum_{[\gamma']\in\calR[\tau_\beta\phi']}
\ind(\bar\beta\bar{f};p'(\fix(\gamma\beta\tilde{f}))[\gamma']_{\tau_\beta \phi'} \right) \\
&=\frac{1}{[\Pi:\Gamma]}\sum_{\bar\beta\in\Pi/\Gamma} \rho_\beta\circ\hat{i}^\beta\left(RT(\bar\beta\bar{f},\beta\tilde{f})\right).\qedhere
\end{align*}

%
%
\end{proof}

\section{Coincidence theory}

Let $M_1$ and $M_2$ be closed orientable manifolds of equal dimension $n$.
We will generalize the general setup of the previous section to the setting of pairs of mappings $f,g:M_1 \to M_2$ and the coincidence set 
\[
\Coin(f,g)=\{x \in M_1 \mid f(x)= g(x)\}.
\]

Let $p_1:\tilde{M}_1\to M_1$ and $p_2:\tilde{M}_2\to M_2$ be the universal covering projections
with the groups of covering transformations $\Pi_1$ and $\Pi_2$ respectively.
By fixing lifts $\tilde{f}$ and $\tilde{g}$ of $f$ and $g$,
the continuous maps $f$ and $g$ induce homomorphisms $\phi:\Pi_1\to \Pi_2$ and
$\psi:\Pi_1\to \Pi_2$ between the groups of covering transformations as follows.
$$
\phi(\alpha)\tilde{f}=\tilde{f}\alpha,\
\psi(\alpha)\tilde{g}=\tilde{g}\alpha,\quad\forall\alpha\in\Pi_1.
$$
The set of Reidemeister classes $\calR[\phi,\psi]$ is defined as the quotient of $\Pi_2$ by the doubly-twisted conjugacy relation $\alpha \sim \beta$ if and only if there is some $\gamma\in \Pi_1$ with $\alpha = \psi(\gamma)^{-1}\beta\phi(\gamma)$. The Reidemeister class containing $\beta$ will be written as $[\beta] \in \calR[\phi,\psi]$, or when we wish to emphasize the homomorphisms, as $[\beta]_{\phi,\psi}$.

Then it is well known that the coincidence set $\Coin(f,g)$ splits into a disjoint union of coincidence classes. That is,
\begin{align}\label{decomp1}
\Coin(f,g) = \bigsqcup_{[\beta]\in\calR[\phi,\psi]} p_1\!\left(\Coin(\beta\tilde{f},\tilde{g})\right).
\end{align}
Thus any coincidence class $\bbs=p_1\!\left(\Coin(\beta\tilde{f},\tilde{g})\right)$
can be indexed by the Reidemeister class $[\beta]$; $\bbs=\bbs_{[\beta]}$.

\begin{Def}
The \emph{coincidence Reidemeister trace} of $(f,g)$ is defined as follows:
\begin{align}
\label{RT0}
RT(f,\tilde{f},g,\tilde{g})
&=\sum_{[\beta]\in\calR[\phi,\psi]}\ind(f,g;\bbs_{[\beta]})[\beta]\\
&=\sum_{[\beta]\in\calR[\phi,\psi]}\ind(f,g;p_1\!\left(\Coin(\beta\tilde{f},\tilde{g})\right))[\beta]\in\bbz\calR[\phi,\psi].
\notag
\end{align}
\end{Def}
where $\ind$ denotes the coincidence index of a coincidence class.

We remark that the number of non-zero terms in this sum is the Nielsen coincidence number $N(f,g)$,
and the sum of the coefficients is the Lefschetz coincidence number $L(f,g)$.

For $\beta\in\Pi_2$, $\beta\tilde{f}$ is another lift of $f$.
Then the homomorphism induced by $f$ by using the lift $\beta\tilde{f}$
is $\tau_\beta\phi:\Pi_1\to\Pi_2$, where $\tau_\beta$ denotes conjugation by $\beta$.

As in fixed point theory, the quantities $RT(f,\tilde f, g,\tilde g)$ and $RT(f,\beta \tilde f, g,\tilde g)$ are related by the map $\rho_\beta: \calR[\tau_\beta \phi,\psi] \to \calR[\phi,\psi]$ defined by 
\[ \rho_\beta([\gamma]_{\tau_\beta \phi, \psi}) = [\gamma\beta]_{\phi,\psi}.\] 
It can be verified as in fixed point theory that:
\begin{equation}\label{coinsame}
RT(f,\tilde f, g, \tilde g) = \rho_\beta(RT(f,\beta \tilde f, g, \tilde g)).
\end{equation}



Let $\Gamma_1$ and $\Gamma_2$ be finite index normal subgroups of $\Pi_1$ and $\Pi_2$
so that $\phi(\Gamma_1)\subset \Gamma_2$ and $\psi(\Gamma_1)\subset \Gamma_2$.
For each $\bar\beta\in\Pi_1/\Gamma_2$ and $\beta\in u_2^{-1}(\bar\beta)$,
we have the following commutative diagram
\begin{align}\label{CD}
\CD
1@>>>\Gamma_1@>{i_1}>>\Pi_1@>{u_1}>>\Pi_1/\Gamma_1@>>>1\\
@.@V{\tau_{\beta}\phi'}V{\psi'}V@V{\tau_{\beta}\phi}V{\psi}V@V{\tau_{\bar\beta}\bar\phi}V{\bar{\psi}}V\\
1@>>>\Gamma_2@>{i_2}>>\Pi_2@>{u_2}>>\Pi_2/\Gamma_2@>>>1
\endCD
\end{align}

Moreover the following sequence of groups
$$
1\to \coin(\tau_\beta\phi', \psi')\stackrel{i_1^\beta}{\lra}
\coin(\tau_\beta\phi, \psi)\stackrel{u_1^\beta}{\lra}
\coin(\tau_{\bar\beta}\bar\phi, \bar\psi)
$$
is exact, where $\coin$ denotes the coincidence subgroup of a pair of homomorphisms.
We remark also that $i_2:\Gamma_2\to\Pi_2$ and $u_2:\Pi_2\to\Pi_2/\Gamma_2$
induce maps between the Reidemeister sets $\hat{i}_2^\beta:\mathcal{R}[\tau_\beta\phi',\psi']\to\mathcal{R}[\tau_\beta\phi,\psi]$ and $\hat{u}_2^\beta:\mathcal{R}[\tau_\beta\phi,\psi] \to\mathcal{R}[\tau_{\bar{\beta}}\bar{\phi},\bar{\psi}]$
such that $\hat{u}_2^\beta$ is surjective and $(\hat{u}_2^\beta)^{-1}([\bar{1}])=\mathrm{im}(\hat{i}_2^\beta)$.
That is, the following sequence of sets is exact:
$$
\mathcal{R}[\tau_\beta\phi',\psi']\buildrel{\hat{i}_2^\beta}\over\lra
\mathcal{R}[\tau_\beta\phi,\psi]\buildrel{\hat{u}_2^\beta}\over\lra
\mathcal{R}[\tau_{\bar{\beta}}\bar{\phi},\bar{\psi}]\lra1.
$$

Denoting $\bar{M}_1= \Gamma_1\bs\tilde{M}_1$ and $\bar{M}_2= \Gamma_2\bs\tilde{M}_2$,
we see that $\bar{M}_1$ and $\bar{M}_2$ are regular coverings of $M_1$ and $M_2$ such that
\begin{align*}
&p_1:\tilde{M}_1 \buildrel{p_1'}\over\lra\bar{M}_1\buildrel{\bar{p}_1}\over\lra M_1,\\
&p_2:\tilde{M}_2 \buildrel{p_2'}\over\lra\bar{M}_2\buildrel{\bar{p}_2}\over\lra M_2
\end{align*}
are covering projections. 

Since $\phi(\Gamma_1)\subset \Gamma_2$ and $\psi(\Gamma_1)\subset \Gamma_2$,
the pair of lifts $(\tilde{f},\tilde{g})$ induces the pair of lifts $(\bar{f},\bar{g})$ of $(f,g)$ so that
for any $\bar\beta\in\Pi_1/\Gamma_2$ and $\beta\in u_2^{-1}(\bar\beta)$,
the following diagram is commutative
\begin{equation}\label{coinCD}
\CD
\tilde{M}_1@>{\beta\tilde{f}}>{\tilde{g}}>\tilde{M}_2\\
@VV{p_1'}V@VV{p_2'}V\\
\bar{M}_1@>{\bar\beta\bar{f}}>{\bar{g}}>\bar{M}_2\\
@VV{\bar{p}_1}V@VV{\bar{p}_2}V\\
M_1@>{f}>{g}>M_2
\endCD
\end{equation}

We remark also that
the pair of homomorphisms $(\tau_\beta\phi', \psi')$
are the homomorphisms induced by $(\bar\beta\bar{f},\bar{g})$
by fixing the lifts $(\beta\tilde{f},\tilde{g})$.
Similarly, the pair of homomorphisms $(\tau_{\bar\beta}\bar\phi, \bar\psi)$
are the homomorphisms induced by $({f},{g})$
by fixing the lifts $(\bar\beta\bar{f},\bar{g})$.

Therefore, the coincidence Reidemeister trace of $(\bar\beta\bar{f},\bar{g})$ is by definition
\begin{align}\label{RT lift}
&RT(\bar\beta\bar{f},\beta\tilde{f}, \bar{g},\tilde{g})\\
&=\sum_{[\gamma]\in\calR[\tau_\beta\phi',\psi']}
\ind(\bar\beta\bar{f},\bar{g};p_1'\!\left(\Coin(\gamma(\beta\tilde{f}),\tilde{g})\right)[\gamma]\in\bbz\calR[\tau_\beta\phi',\psi'].
\notag
\end{align}

As in fixed point theory, there is an averaging formula for the Lefschetz coincidence number. The following formula is apparently due to McCord:
\[ L(f,g) = \frac{1}{[\Pi_1:\Gamma_1]} \sum_{\bar \beta \in \Pi_2/\Gamma_2} L(\bar \beta\bar f,\bar g). \]
The summation on the right side above is discussed in \cite[page 360]{mccord92}, and the full formula appears in \cite[page 88]{mccord97}. The corresponding averaging formula for Nielsen numbers does not hold in general, this is studied in \cite{KL}.


\section{Main result}

The following averaging formula for the coincidence Reidemeister trace is our main result:
\begin{Thm}\label{mainthm}
For maps $f,g:M_1\to M_2$, choose some specific lifts $\bar f,\bar g:\bar M_1 \to \bar M_2$. We have
$$
RT(f,\tilde{f},g,\tilde{g})=\frac{1}{[\Pi_1:\Gamma_1]}
\sum_{\bar\beta\in\Pi_2/\Gamma_2} \rho_\beta\circ\hat{i}_2^\beta\left(RT(\bar\beta\bar{f},\beta\tilde{f},\bar{g},\tilde{g})\right).
$$
\end{Thm}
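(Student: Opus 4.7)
The plan is to follow the same template as the proof of Theorem~2.1 (the fixed point version), systematically replacing fixed point data with the corresponding coincidence data. Since Section~2 has already set up the coincidence analogues of the exact sequences, the coincidence subgroups $\coin(\tau_\beta\phi',\psi')$, and the maps $\hat i_2^\beta$, $\rho_\beta$, the structural skeleton of the argument transfers directly.

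First I would start from the definition in \eqref{RT0},
$$
RT(f,\tilde f, g, \tilde g) = \sum_{[\beta]\in\calR[\phi,\psi]} \ind(f,g;p_1(\Coin(\beta\tilde f,\tilde g)))\,[\beta],
$$
and invoke a coincidence analogue of Lemma~\ref{relabel} (drawn from \cite{KL}) to re-index the classes via the exact sequence $\calR[\tau_\beta\phi',\psi']\to\calR[\tau_\beta\phi,\psi]\to\calR[\tau_{\bar\beta}\bar\phi,\bar\psi]\to 1$. After choosing, for each $[\bar\beta]\in\calR[\bar\phi,\bar\psi]$, a preimage $[\beta]\in\calR[\phi,\psi]$, this yields
$$
RT(f,\tilde f,g,\tilde g) = \sum_{[\bar\beta]\in\calR[\bar\phi,\bar\psi]}\sum_{[\gamma]\in\im(\hat i_2^\beta)} \ind(f,g;S_{\gamma\beta})\,[\gamma\beta]_{\phi,\psi},
$$
where $S_{\gamma\beta}=p_1(\Coin(\gamma\beta\tilde f,\tilde g))$.

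Next I would replace the inner sum over $[\gamma]\in\im(\hat i_2^\beta)$ by a sum over $[\gamma']\in\calR[\tau_\beta\phi',\psi']$, inserting the correction factor $1/\#(\hat i_2^\beta)^{-1}([\gamma])$. Using the coincidence analogue of \cite[Lemma~2.1]{KLL-Nagoya}, which should give
$$
\#(\hat i_2^\beta)^{-1}([\gamma]) = [\coin(\tau_{\bar\beta}\bar\phi,\bar\psi):u_1^{\gamma\beta}(\coin(\tau_{\gamma\beta}\phi,\psi))],
$$
together with the coincidence analogue of \cite[Remark~2.3]{KLL-Nagoya}, namely
$$
[\Pi_1:\Gamma_1] = \#[\bar\beta]\cdot\#\coin(\tau_{\bar\beta}\bar\phi,\bar\psi),
$$
I can rewrite the leading coefficient as
$$
\frac{\#u_1^{\gamma\beta}(\coin(\tau_{\gamma\beta}\phi,\psi))}{[\Pi_1:\Gamma_1]}.
$$
(Here the factor $1/\#[\bar\beta]$ comes from converting the sum over $[\bar\beta]\in\calR[\bar\phi,\bar\psi]$ into a sum over all $\bar\beta\in\Pi_2/\Gamma_2$, exactly as in the fixed point proof.)

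The critical step, and the one I expect to be the main obstacle, is establishing the local index identity
$$
\ind(\bar\beta\bar f,\bar g;\,p_1'(\Coin(\gamma\beta\tilde f,\tilde g))) = \#u_1^{\gamma\beta}(\coin(\tau_{\gamma\beta}\phi,\psi))\cdot \ind(f,g;S_{\gamma\beta}),
$$
the coincidence analogue of \cite[Lemma~2.5]{KLL-Nagoya}. The orientability hypothesis on $M_1$ and $M_2$ is essential here: it ensures that $\bar p_1$ and $\bar p_2$ are orientation-preserving local homeomorphisms, so that the coincidence index is preserved on each local sheet; then the covering multiplicity above each coincidence point of $S_{\gamma\beta}$ is precisely $\#u_1^{\gamma\beta}(\coin(\tau_{\gamma\beta}\phi,\psi))$. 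Granting this, substitution gives
$$
RT(f,\tilde f,g,\tilde g) = \frac{1}{[\Pi_1:\Gamma_1]}\sum_{\bar\beta\in\Pi_2/\Gamma_2}\sum_{[\gamma']\in\calR[\tau_\beta\phi',\psi']} \ind(\bar\beta\bar f,\bar g;\,p_1'(\Coin(\gamma\beta\tilde f,\tilde g)))\,[\gamma\beta]_{\phi,\psi}.
$$
Finally, using $[\gamma\beta]_{\phi,\psi} = \rho_\beta(\hat i_2^\beta([\gamma']_{\tau_\beta\phi',\psi'}))$ and comparing with \eqref{RT lift}, I can pull $\rho_\beta\circ\hat i_2^\beta$ outside the inner sum to recognize $RT(\bar\beta\bar f,\beta\tilde f,\bar g,\tilde g)$, obtaining exactly the claimed averaging formula.
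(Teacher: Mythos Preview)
Your proposal is correct and follows essentially the same approach as the paper's own proof: the paper likewise re-indexes via \cite[Lemma~2.2]{KL}, inserts the preimage-count correction using \cite[Lemma~2.1]{KL}, applies the orbit--stabilizer identity $[\Pi_1:\Gamma_1]=\#[\bar\beta]\cdot\#\coin(\tau_{\bar\beta}\bar\phi,\bar\psi)$, and handles the covering-multiplicity step you flagged as the ``main obstacle'' by citing \cite[Lemma~4.2]{KL} together with the fact that $p_1'$, $p_1$ are orientation-preserving local homeomorphisms. The only difference is bibliographic: the coincidence analogues you anticipate are already established in \cite{KL}, so no new work is needed there.
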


\begin{proof}
The proof follows exactly the structure of the proof of Theorem \ref{fixptrt}, substituting the required results in coincidence theory from \cite{KL}.

Lemma 2.2 of \cite{KL} gives a coincidence version of Lemma \ref{relabel}. Specifically the coincidence classes decompose as follows:
\[ \Coin(f,g) = \bigsqcup_{[\bar \beta]\in \calR[\bar\phi,\bar \psi]} \bigsqcup_{[\gamma]_{\tau_\beta\phi,\psi}\in \im(\hat i_2^\beta)} p_1(\Coin(\gamma\beta\tilde f, \tilde g)). \]
Letting $S_{\gamma\beta} =  p_1(\Coin(\gamma\beta\tilde f, \tilde g))$, we may relabel coincidence classes and we have:
\[
RT(f,\tilde{f},g,\tilde g)=\sum_{[\bar\beta] \in\calR[\bar\phi,\bar \psi]} \left( \sum_{[\gamma]_{\tau_\beta\phi, \psi} \in\im(\hat{i}_2^\beta)}\ind(f,g;S_{\gamma\beta}) 
[\gamma\beta]_{\phi,\psi} \right).
\]
As in the proof of Theorem \ref{fixptrt} we obtain:
\[
RT(f,\tilde{f},g,\tilde g)=\sum_{[\bar\beta] \in\calR[\bar\phi,\bar \psi]} \left( \sum_{[\gamma']\in \calR[\tau_\beta\phi',\psi']} \frac{1}{\#(\hat i_2^\beta)^{-1}([\gamma])} \ind(f,g;S_{\gamma\beta})
[\gamma\beta]_{\phi,\psi} \right).
\]

By \cite[Lemma 2.1]{KL}, we also have
$$
\#(\hat i_2^\beta)^{-1}([\gamma]) = 
[\coin(\tau_{\bar\beta}\bar\phi,\bar \psi):u_{\gamma\beta}(\coin(\tau_{\gamma\beta}\phi,\psi))]
= \frac{\#\coin(\tau_{\bar\beta}\bar\phi,\bar \psi)}{\#u_{\gamma\beta}(\coin(\tau_{\gamma\beta}\phi,\psi))}
$$
and thus we can rewrite the above as
\begin{align*}
RT(f,\tilde{f},g,\tilde g)&=\sum_{[\bar\beta] \in\calR[\bar\phi,\bar \psi]} \left( \sum_{[\gamma']\in \calR[\tau_\beta\phi',\psi']} 
\frac{\#u_{\gamma\beta}(\coin(\tau_{\gamma\beta}\phi,\psi))} {\#\coin(\tau_{\bar\beta}\bar\phi,\bar \psi)}
\ind(f,g;S_{\gamma\beta})
[\gamma\beta]_{\phi,\psi} \right) \\
&=\sum_{\bar\beta \in\Pi_2/K} \left( \sum_{[\gamma']\in \calR[\tau_\beta\phi',\psi']} 
\frac{ \#u_{\gamma\beta}(\coin(\tau_{\gamma\beta}\phi,\psi))} {\#[\bar \beta]\cdot \#\coin(\tau_{\bar\beta}\bar\phi,\bar \psi)}
\ind(f,g;S_{\gamma\beta})
[\gamma\beta]_{\phi,\psi} \right).
\end{align*}


On each Reidemeister class $[\bar\beta]$ of $\calR[\bar\phi,\bar\psi]$,
the group $\Pi_1/\Gamma_1$ acts transitively by the rule $\bar\beta\mapsto \bar\psi(\bar\delta)\bar\beta\bar\phi(\bar\delta)^{-1}$. The isotropy subgroup at $\bar\beta$ is
$$
\left\{\bar\delta\mid \bar\psi(\bar\delta)\bar\beta\bar\phi(\bar\delta)^{-1}=\bar\beta\right\}
=\coin(\tau_{\bar\beta}\bar\phi,\bar\psi).
$$
Thus
$$
\left[\Pi_1:\Gamma_1\right]=\#[\bar\beta]\cdot \#\coin(\tau_{\bar\beta}\bar\phi,\bar\psi),\
\forall [\bar\beta]\in\calR[\bar\phi,\bar\psi],
$$
and so
\[
RT(f,\tilde{f},g,\tilde g) 
=\frac{1}{[\Pi_1:\Gamma_1]} \sum_{\bar\beta \in\Pi_2/K} \left( \sum_{[\gamma']\in \calR[\tau_\beta\phi',\psi']} 
\#u_{\gamma\beta}(\coin(\tau_{\gamma\beta}\phi,\psi))
\ind(f,g;S_{\gamma\beta})
[\gamma\beta]_{\phi,\psi} \right).
\]
Now we remark that the coincidence class $p_1'(\Coin(\gamma\beta\tilde{f},\tilde{g}))$ of $(\bar\beta\bar{f},\bar{g})$
covers by $\bar{p}_1$ the coincidence class $S_{\gamma\beta} = p_1(\Coin(\gamma\beta\tilde{f},\tilde{g}))$ of $(f,g)$.
By Lemma~4.2 of \cite{KL} we see that this cover is ${\#u_1(\coin(\tau_{\gamma\beta}\phi,\psi))}$-fold.
Since the projections $p'_1$ and $p_1$ are orientation preserving local homeomorphisms and the index is a local invariant, we have 
\[ \ind(\beta\bar f,\bar g; p_1'(\Coin(\gamma\beta\tilde f,\tilde g))) = \#u_{\gamma\beta}(\coin(\tau_{\gamma\beta}\phi,\psi)) \cdot \ind(f,g;S_{\gamma\beta}), \]
Therefore, the above identity reduces to:
\begin{align*}
RT(f,\tilde{f},g,\tilde g) 
&=\frac{1}{[\Pi_1:\Gamma_1]} \sum_{\bar\beta \in\Pi_2/K} \left( \sum_{[\gamma']\in \calR[\tau_\beta\phi',\psi']} 
\ind(\bar\beta\bar f,\bar g;S_{\gamma\beta})
[\gamma\beta]_{\phi,\psi} \right) \\
&= \frac{1}{[\Pi_1:\Gamma_1]} \sum_{\bar\beta \in\Pi_2/K} 
\rho_\beta \circ \hat i_2^\beta
\left( \sum_{[\gamma']\in \calR[\tau_\beta\phi',\psi']} 
\ind(\bar\beta\bar f,\bar g;S_{\gamma\beta})
[\gamma']_{\phi',\psi'} \right) \\
&= \frac{1}{[\Pi_1:\Gamma_1]} \sum_{\bar\beta \in\Pi_2/K} 
\rho_\beta \circ \hat i_2^\beta \left( RT(\bar \beta \bar f, \beta \tilde f, \bar g, \tilde g) \right). \qedhere
\end{align*}
\end{proof}

\section{An axiomatic proof}
The averaging formula for the coincidence Reidemeister trace can also be obtained as an application of a uniqueness theorem from \cite{s2}. This uniqueness result was a direct generalization of earlier work in \cite{fps,s1,s2} concerning the local fixed point and coincidence index.
To demonstrate the basic idea, we begin with an axiomatic proof of an averaging formula for the local fixed point index.

Recall that the local fixed point index $\ind(f;U)$ is an integer invariant which is defined for any open set $U$ such that $\Fix(f)\cap U$ is compact. 

Let $M$ be a compact differentiable manifold with universal covering $p:\tilde M \to M$ and a finite covering $\bar p: \bar M \to M$ with the natural map $p':\tilde M \to \bar M$ with $p = p' \circ \bar p$. 

When $U\subset M$ is an open set, and $f:U \to M$ is a continuous map, say $(f;U)$ is \emph{admissible} when $\Fix(f)\cap U$ is compact, and let $\mathcal C(M)$ be the set of all admissible pairs on $M$. If $H:M\times [0,1] \to M$ is a homotopy, we say that $H$ is \emph{admissible in $U$} if the set
\[ \{ (x,t) \mid H(x,t) = x \} \]
is compact in $M\times [0,1]$. 

The following is a result of Furi, Pera, and Spadini \cite{fps}.
\begin{Thm}\label{indexuniqueness}
Let $M$ be a compact differentiable manifold, and let $\iota: \mathcal C(M) \to \mathbb R$ be any function satisfying the following 3 axioms:
\begin{itemize}
\item (Homotopy) If there is a homotopy from $(f;U)$ to $(f';U)$ which is admissible in $U$, then $\iota(f;U) = \iota(f';U)$. 
\item (Additivity) If $U_1, U_2$ are disjoint open subsets of $U$ with $\Fix(f) \cap U \subset U_1 \sqcup U_2$, then 
\[ \iota(f;U_1) + \iota(f;U_2) = \iota(f;U). \]
\item (Normalization) If $c$ is a constant map, then $\iota(c;M) = 1$
\end{itemize}
Then $\iota$ is the local fixed point index.
\end{Thm}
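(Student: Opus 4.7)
The plan is to show that the three axioms determine $\iota$ on all of $\mathcal{C}(M)$ by bootstrapping from Normalization. First I would derive three standard consequences of the axioms: \emph{Vanishing} ($\iota(f;U) = 0$ whenever $\Fix(f) \cap U = \emptyset$), obtained by applying Additivity with $U_1 = U_2 = \emptyset$ (forcing $\iota(f;\emptyset) = 0$) and then with $U_1 = U$, $U_2 = \emptyset$; \emph{Excision} ($\iota(f;U) = \iota(f;V)$ whenever $V$ is open with $\Fix(f) \cap U \subset V \subset U$), by applying Additivity to $U_1 = V$, $U_2 = U \setminus \overline{V}$ and using Vanishing on the second piece; and \emph{Localization}, decomposing $\iota(f;U)$ as a sum of contributions over disjoint neighborhoods of the finitely many components of $\Fix(f)\cap U$.

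Second, I would reduce every admissible pair to a canonical local form. A small smooth perturbation of $f$ makes its fixed points in $U$ isolated and non-degenerate (so $\det(I - Df_{x_i}) \neq 0$), and can be realized by an admissible homotopy. In a chart around each such $x_i$, the straight-line homotopy $H_t(y) = (1-t)f(y) + t\bigl(x_i + Df_{x_i}(y - x_i)\bigr)$, restricted to a sufficiently small ball $B$, is admissible and deforms $f$ to its linearization. By Homotopy and Localization, $\iota(f;U)$ is then determined by its values on pairs $(A;B)$ where $B$ is a ball in $\mathbb{R}^n$ and $A$ is a linear map with $\det(I-A) \neq 0$.

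Third, the space $\{A \mid \det(I-A) \neq 0\}$ has exactly two path components, distinguished by $\sign \det(I - A)$, and any path within a component gives an admissible homotopy on $B$ (the origin remains the unique fixed point throughout). Thus $\iota(A;B)$ depends only on $\sign \det(I - A)$. For $\det(I - A) > 0$, I would connect $A$ to the zero linear map; a constant map $c:M\to M$ has a single fixed point at which, in any chart, it is locally the zero map, so combining Excision with Normalization yields $\iota(A;B) = \iota(c;M) = 1$.

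The main obstacle is the orientation-reversing case $\det(I - A) < 0$, which Normalization does not reach directly. My plan here is to realize such a linear model as the local behavior at one fixed point of a cleverly chosen global map $h:M\to M$ whose \emph{total} value $\iota(h;M)$ can be computed by an admissible homotopy to some configuration built from constant maps (using Additivity, Excision, and Normalization), and whose other fixed points sit in the positive component already handled above; Additivity then forces the value on the negative component to equal $-1$. Engineering $h$ so that every stage of the homotopy remains admissible on \emph{all} of $M$---no fixed point escaping to the boundary of a local chart, no unexpected collision---is the delicate heart of the argument, and is where the compactness and differentiability of $M$ are used in an essential way.
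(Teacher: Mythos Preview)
The paper does not actually prove this theorem: it is quoted verbatim as ``a result of Furi, Pera, and Spadini \cite{fps}'' and used as a black box. So there is no ``paper's own proof'' to compare against; your sketch is in effect a proposed reconstruction of the argument from the cited source.

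Your outline is essentially the standard one and matches the strategy of \cite{fps}: derive Vanishing/Excision/Localization, perturb to nondegenerate isolated fixed points, linearize via admissible homotopy, reduce to the two connected components of $\{A:\det(I-A)\neq 0\}$, and pin down the positive component via Normalization. The only place your write-up is soft is exactly where you flag it---the negative component. Your plan (build a global $h$ whose total value is known from Normalization and whose other fixed points are positive, then solve for the negative contribution by Additivity) is the right idea, but as stated it is not yet a proof: you have not exhibited such an $h$, nor the admissible homotopy from $h$ to a configuration of constant-map pieces. In practice one does this entirely locally rather than globally on $M$: inside a single chart ball $B\subset\mathbb{R}^n$ one writes down an explicit map with exactly two fixed points (one of each sign) that is admissibly homotopic to a map with a single positive fixed point; Additivity plus the already-computed value $+1$ on the positive component then forces $-1$ on the negative one. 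Doing it in a chart avoids the global ``engineering $h$ on all of $M$'' that you describe as delicate. If you want to turn your sketch into a complete proof, replacing the global $h$ by this local two-fixed-point model is the cleanest fix.
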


We will use the uniqueness theorem above to obtain the following averaging formula for the local fixed point index.
\begin{Thm}\label{indexaverage}
Let $\ind(f;U)$ be the local fixed point index of an admissible pair, and write $\bar U = p^{-1}(U)$. For any specific lift $\bar f:\bar M \to \bar M$ of $f$, we have
\begin{equation}\label{index-average}
\ind(f;U) = \frac1{[\Pi:\Gamma]} \sum_{\bar \beta \in  \Pi/\Gamma} \ind(\bar \beta\bar f;\bar U). 
\end{equation}
\end{Thm}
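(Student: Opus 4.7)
The plan is to apply the uniqueness Theorem~\ref{indexuniqueness} to the candidate function
\[
\iota(f;U) = \frac{1}{[\Pi:\Gamma]} \sum_{\bar\beta \in \Pi/\Gamma} \ind(\bar\beta\bar f; \bar U),
\]
where $\bar U = \bar p^{-1}(U) \subseteq \bar M$ and $\bar f$ is a choice of lift of $f$. A first observation is that the sum does not depend on this choice, since replacing $\bar f$ by $\bar\alpha\bar f$ merely reindexes the sum via $\bar\beta \mapsto \bar\beta\bar\alpha^{-1}$. For admissibility, the set $\bar p^{-1}(\Fix(f)\cap U)$ is compact (finite preimage of a compact set) and decomposes as the disjoint union $\bigsqcup_{\bar\beta\in \Pi/\Gamma} \Fix(\bar\beta\bar f) \cap \bar U$, because any $\bar x \in \bar p^{-1}(\Fix(f))$ satisfies $\bar p(\bar f(\bar x)) = \bar p(\bar x)$ and is therefore fixed by exactly one lift of $f$. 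Each summand is then compact and $(\bar\beta\bar f;\bar U) \in \mathcal C(\bar M)$.

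For \emph{Homotopy}, an admissible homotopy $H: M \times [0,1] \to M$ from $f$ to $f'$ lifts to $\bar H: \bar M \times [0,1] \to \bar M$ starting at the chosen $\bar f$, and the endpoint $\bar H|_{t=1}$ is a lift of $f'$ which I would use to define $\iota(f';U)$. The homotopy $\bar\beta\bar H$ from $\bar\beta\bar f$ to $\bar\beta\bar H|_{t=1}$ is admissible in $\bar U$, because its coincidence set in $\bar U \times [0,1]$ is the preimage under the finite-sheeted map $\bar p \times \id$ of the coincidence set of $H$ in $U \times [0,1]$. Homotopy invariance of $\ind$ on $\bar M$ then preserves every summand. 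For \emph{Additivity}, the sets $\bar U_i = \bar p^{-1}(U_i)$ are disjoint open subsets of $\bar U$ with $\Fix(\bar\beta\bar f) \cap \bar U \subseteq \bar U_1 \sqcup \bar U_2$ (since $\bar p$ sends fixed points to fixed points), and additivity of $\ind$ on $\bar M$ splits each summand. For \emph{Normalization}, a constant map $c:M\to M$ has each lift $\bar c$ itself constant (the image of connected $\bar M$ must lie in the discrete $\bar p$-fiber over $c(M)$), so every $\bar\beta\bar c$ is a constant self-map of $\bar M$, contributing $\ind(\bar\beta\bar c;\bar M)=1$ by normalization on $\bar M$; the average is $1$.

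With the three axioms verified on $\mathcal C(M)$, Theorem~\ref{indexuniqueness} forces $\iota(f;U) = \ind(f;U)$ for every admissible $(f;U)$, which is the identity \eqref{index-average}. The main subtlety to watch is the Homotopy axiom: one must be careful about which lift of $f'$ is used to set up $\iota(f';U)$ when invoking the lifted homotopy, but the lift-independence of the defining sum makes this cost-free. The other two axioms follow essentially formally from the covering structure and the corresponding axioms applied on $\bar M$.
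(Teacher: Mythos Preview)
Your proof is correct and follows essentially the same approach as the paper's own proof: define the right-hand side as a candidate $\iota$, check well-definedness via reindexing of lifts, and verify the three axioms of Theorem~\ref{indexuniqueness} by lifting homotopies, pulling back the additivity decomposition along $\bar p$, and noting that constant maps lift to constants. One small imprecision: in the Homotopy step you say the fixed-point set of $\bar\beta\bar H$ in $\bar U\times[0,1]$ \emph{is} the preimage of that of $H$, whereas it is only a closed piece of that preimage (the full preimage decomposes over all $\bar\beta$); this does not affect the compactness conclusion.
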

\begin{proof}
Before proving formula \eqref{index-average} we should note that when $\Fix(f) \cap U$ is compact in $M$, since $q$ is a covering map we will have $\Fix(\bar \beta \bar f) \cap \bar U$ compact in $\bar M$ for any lift $\bar f$ of $f$, and thus the pair $(\bar f, \bar U)$ is admissible and so the index on the right side of \eqref{index-average} is defined.

For any admissible pair $(f;U)$, let 
\[ \iota(f;U) =  \frac1{[\Pi:\Gamma]} \sum_{\bar \beta \in  \Pi/\Gamma} \ind(\bar \beta \bar f;\bar U), \]
where $\bar f$ is any lift of $f$. It suffices to show that $\iota$ satisfies the three axioms of Theorem \ref{indexuniqueness}.

First we remark that $\iota$ is well defined (independent of the choice of lift $\bar f$). It suffices to show that if $\bar f$ and $\hat f$ are two different choices of lifts, then 
\[ \sum_{\bar \beta \in \Pi/\Gamma} \ind(\bar \beta \bar f;\bar U) = \sum_{\bar \beta \in \Pi/\Gamma} \ind(\bar \beta\hat f; \bar U). \]
There is some $\bar \alpha \in \Pi/\Gamma$ with $\bar \alpha \hat f = \bar f$, and so we have:
\begin{align*}
\sum_{\bar \beta \in  \Pi/\Gamma} \ind(\bar \beta \bar f;\bar U) &= \frac1{[\Pi:\Gamma]} \sum_{\bar \beta \in  \Pi/\Gamma} \ind(\bar \beta \bar \alpha \hat f;\bar U) = \frac1{[\Pi:\Gamma]} \sum_{\bar \beta \in  \Pi/\Gamma} \ind(\bar \beta \hat f;\bar U)
\end{align*}
as desired. Now we show that $\iota$ satisfies the three axioms of Theorem \ref{indexuniqueness}.

For the homotopy axiom, let $H$ be an admissible homotopy from $(f;U)$ to $(f';U)$. Then for any lift $\bar\beta \bar f$ of $f$, the homotopy $H$ will lift to a unique homotopy $ \bar H$ from $\bar\beta \bar f$ to some lift $\bar\beta \bar f'$ of $f'$, and it can be checked that this $\bar H$ is admissible in $\bar U$. 
Thus we have
\begin{align*} 
\iota(f;U) &= \frac1{[\Pi:\Gamma]} \sum_{\bar \beta \in \Pi/\Gamma} \ind(\bar \beta \bar f;\bar U) = \frac1{[\Pi:\Gamma]} \sum_{\bar \beta \in \Pi/\Gamma} \ind(\bar \beta\bar f';\bar U) =
\iota(f';U)
\end{align*}
and so $\iota$ satisfies the homotopy axiom. 

For the additivity axiom, let $(f;U)\in \mathcal C(M)$ with $\Fix(f)\cap U \subset U_1 \sqcup U_2$. Then for any lift $\bar f$ of $f$ we have:
\[ \Fix(\bar f) \cap \bar U \subset \bar p^{-1} (\Fix(f) \cap U) \subset \bar p^{-1}(U_1 \sqcup U_2) = \bar U_1 \sqcup \bar U_2. \]
Thus the additivity axiom for $\ind(\bar \beta \bar f;\bar U)$ gives:
\begin{align*}
\iota(f;U) &=  \frac1{[\Pi:\Gamma]} \sum_{\bar \beta \in  \Pi/\Gamma} \ind(\bar \beta\bar f;\bar U) \\
&=  \frac1{[\Pi:\Gamma]} \sum_{\bar \beta \in  \Pi/\Gamma} (\ind(\bar \beta\bar f;\bar U_1) + \ind(\bar \beta\bar f;\bar U_2 ))\\
&= \frac1{[\Pi:\Gamma]} \sum_{\bar \beta \in  \Pi/\Gamma} \ind(\bar \beta\bar f;\bar U_1) + \frac1{[\Pi:\Gamma]} \sum_{\bar \beta \in  \Pi/\Gamma}\ind(\bar \beta\bar f;\bar U_2) \\
&= \iota(f;U_1) + \iota(f;U_2),
\end{align*}
and thus $\iota$ satisfies the additivity axiom.

For the normalization axiom, let $c:M\to M$ be a constant map. Then any lift $\bar c: \bar M \to \bar M$ is a constant map on $\bar M$, and so $\bar \beta \bar c$ is a constant map for any $\bar \beta$. Thus using the normalization axiom on $\ind(\bar \beta \bar c, \bar M)$ gives:
\[ \iota(c;M) = \frac1{[\Pi:\Gamma]} \sum_{\bar \beta \in  \Pi/\Gamma} \ind(\bar \beta \bar c;\bar M) = \frac1{[\Pi:\Gamma]} \sum_{\bar \beta \in  \Pi/\Gamma} 1 = 1. \]
Thus $\iota$ satisfies the normalization axiom.

Since $\iota$ satisfies the three axioms, it must be the fixed point index.
\end{proof}

If we take $U=M$ above we obtain a new proof of the classical Lefschetz averaging formula, since $\ind(f;M) = L(f)$. 

An averaging formula like the one in Theorem~\ref{indexaverage} can be obtained for the local coincidence index, by substituting the uniqueness theorem for the coincidence index from \cite{s1} in place of Theorem \ref{indexuniqueness}. The uniqueness theorems in \cite{s2} can be used to obtain similar averaging formulas for the local Reidmeister trace and local coincidence Reidemeister trace. We will prove only the most general of these, the local coincidence Reidemeister trace.

Theorem \ref{indexuniqueness} was generalized to the coincidence Reidemeister trace in \cite{s2} as follows: Let $M_1$ and $M_2$ be oriented closed manifolds of the same dimension, let $\tilde M_1$ and $\tilde M_2$ be their universal covers, and let $\mathcal C(M_1,M_2)$ be the set of \emph{admissible tuples} $(f,\tilde f, g, \tilde g; U)$ where $f,g: M_1\to M_2$ are continuous, $\tilde f,\tilde g:\tilde M_1 \to \tilde M_2$ are lifts of $f$ and $g$, and $U$ is an open set with $\Coin(f,g) \cap U$ compact. If $H,G:M_1\times [0,1]\to M_2$ are homotopies, we say the pair $(H,G)$ is admissible in $U$ when the set
\[ \{ (x,t)\in M_1 \times [0,1] \mid H(x,t) = G(x,t) \} \]
is compact.

The local coincidence Reidemeister trace is defined as in Definition \ref{RT0}, but restricting only to the subset $U$:
\[ 
RT(f,\tilde f,g, \tilde g ; U) = \sum_{[\beta]\in \calR[\phi,\psi]} \ind(f|_U,g|_U;\bbs_{[\beta]}|_U) [\beta],
\]
where $f|_U$ and $g|_U$ denote the restrictions of $f$ and $g$ to $U$, and 
\[
\bbs_{[\beta]}|_U = p_1(\Coin(\beta \tilde f|_{\tilde U}, \tilde g|_{\tilde U})), \qquad \tilde U = p_1^{-1}(U). \]

It is clear from the definition that $RT(f,\tilde f,g,\tilde g; M_1) = RT(f,\tilde f,g,\tilde g)$.

The manifolds in \cite{fps,s1,s2} are always assumed to be differentiable. This assumption is made in order to use transversality arguments, but is not necessary. Jezierski proves a ``topological transversality lemma'' in \cite{jez} which provides the same results for topological manifolds. The work in \cite{gs} generalizes all results from \cite{fps,s1,s2} without the differentiability assumption, so we will not require differentiability in this paper. We will also use the normalization axiom used in \cite{gs}, which is weaker than the one from \cite{s2} based on the Lefschetz number. In our normalization property we assume that $M_1$ and $M_2$ have fixed chosen orientations. 

When $S$ is any set, let $\epsilon: \mathbb Z S \to \mathbb Z$ be the augmentation map (sum of coefficients).

\begin{Thm}[\cite{s2}, Theorem 3; \cite{gs}, Theorem 44]\label{rtuniqueness} Let $\sigma$ be a function with domain $\calC(M_1,M_2)$ such that $\sigma(f,\tilde f, g, \tilde g; U) \in \mathbb Z \calR[\phi,\psi]$, where $\phi$ and $\psi$ are the induced homomorphisms of $f$ and $g$ respectively. Assume that $\sigma$ satisfies the following axioms:

\begin{itemize}
\item (Homotopy) If there is a pair of homotopies from $f$ to $f'$ and $g$ to $g'$ which is admissible in $U$, then $\sigma(f,\tilde f, g, \tilde g; U) = \sigma(f', \tilde f', g', \tilde g'; U)$, where $\tilde f'$ and $\tilde g'$ are the unique lifts of $f'$ and $g'$ obtained by lifting the homotopies to $\tilde f$ and $\tilde g$.
\item (Additivity) If $U_1, U_2$ are disjoint open subsets of $U$ with $\Coin(f,g) \cap U \subset U_1 \sqcup U_2$, then
\[ \sigma(f,\tilde f, g, \tilde g; U) = \sigma(f,\tilde f, g, \tilde g; U_1) + \sigma(f,\tilde f, g, \tilde g; U_2) \]
\item (Normalization) Let $c:M_1 \to M_2$ be a constant map with constant value $c\in M_2$, and $g:U\to M_2$ be an orientation preserving embedding with $g(x)=c$. (That is, $g$ carries the positive orientation on $U$ to the positive orientation on $g(U)$.) Then:
\[ \epsilon(\sigma(c,\tilde c, g, \tilde g; U)) = 1. \]
\item (Lift invariance) For any $\alpha, \beta \in \Pi_2$, we have
\[ \epsilon(\sigma(f,\tilde f, g, \tilde g; U)) = \epsilon(\sigma(f,\alpha \tilde f, g,\beta \tilde g; U)) \]
\item (Coincidence of lifts) If $[\alpha]$ appears in $\sigma(f,\tilde f, g, \tilde g; U)$ with nonzero coefficient, then $\alpha \tilde f$ and $\tilde g$ have a coincidence on $p_1^{-1}(U)$.
\end{itemize}
Then $\sigma$ is the local coincidence Reidemeister trace.
\end{Thm}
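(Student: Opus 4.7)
The plan is to mimic the axiomatic uniqueness strategy from \cite{fps,s1,s2}, reducing the general situation to a finite sum of local models that are directly pinned down by the Normalization axiom. \textbf{Step 1 (transverse reduction).} Apply the topological transversality lemma of \cite{jez} to deform $(f,g)$ through a pair of homotopies admissible in $U$ to a pair $(f',g')$ whose coincidence set $\Coin(f',g')\cap U=\{x_1,\ldots,x_k\}$ is finite and transverse. These homotopies lift uniquely once $\tilde f,\tilde g$ are fixed, giving preferred lifts $\tilde f',\tilde g'$. Invoking Homotopy, then Additivity on small pairwise-disjoint neighborhoods $U_1,\ldots,U_k$ of the $x_i$, yields
\[
\sigma(f,\tilde f,g,\tilde g;U)=\sum_{i=1}^k \sigma(f',\tilde f',g',\tilde g';U_i).
\]

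\textbf{Step 2 (isolating a single Reidemeister class).} For each $i$, the Coincidence of lifts axiom forces $\sigma(f',\tilde f',g',\tilde g';U_i)$ to be supported only on those $[\alpha]\in\calR[\phi,\psi]$ for which $\alpha\tilde f'$ and $\tilde g'$ meet inside $p_1^{-1}(U_i)$. Since $x_i$ is isolated and transverse and $U_i$ is chosen small, exactly one such class $[\alpha_i]$ arises --- precisely the Reidemeister label that the local coincidence Reidemeister trace attaches to $x_i$. Hence $\sigma(f',\tilde f',g',\tilde g';U_i)=n_i[\alpha_i]$ for a single integer $n_i$, and it remains to show $n_i$ equals the local coincidence index at $x_i$.

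\textbf{Step 3 (local normal form).} Apply Homotopy a second time, inside $U_i$, to replace $(f',g')$ by a local model in a Euclidean chart: $f'$ becomes a constant map $c$ and $g'$ becomes a linear embedding with $g'(x_i)=c$, connected to the original pair by a straight-line homotopy in the chart whose admissibility follows from the isolation of $x_i$. When this embedding is orientation preserving, the Normalization axiom gives $\epsilon(\sigma(c,\tilde c,g',\tilde g';U_i))=1$, so $n_i=+1$, matching the index. For the orientation-reversing case, build an auxiliary model pair on a small disc whose coincidence set consists of two transverse points of opposite local type and for which the total $\sigma$ on the disc can be computed independently (for instance via Homotopy to a coincidence-free pair, so the total is zero); Additivity then forces the orientation-reversing contribution to be $-1$. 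The Lift invariance axiom is used throughout to guarantee that the augmentation of each local term is well defined independently of the lifts chosen during these manipulations.

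\textbf{Main obstacle.} The delicate piece is the orientation-reversing case in Step 3: the Normalization axiom only supplies the value $+1$, so the sign $-1$ at an orientation-reversing transverse coincidence must be extracted axiomatically by constructing a compatible auxiliary pair on which Additivity and an independently computable instance of $\sigma$ conspire to force the correct sign. A secondary technical point is verifying that every deformation built in Steps 1 and 3 is genuinely admissible in the relevant open set --- i.e., that the coincidence set of the time-dependent pair remains compact throughout the homotopy --- which is exactly what the transversality lemma of \cite{jez} and the isolation of each $x_i$ are designed to guarantee.
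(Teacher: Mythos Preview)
The paper does not actually prove this theorem: it is stated with citations to \cite{s2} (Theorem 3) and \cite{gs} (Theorem 44), and is then used as a black box to give the axiomatic proof of the averaging formula. So there is no ``paper's own proof'' to compare against here.

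That said, your sketch tracks the strategy of the cited references quite faithfully: transverse reduction via Jezierski's lemma, decomposition by Additivity into isolated coincidences, identification of the unique Reidemeister label via the Coincidence of lifts axiom, and evaluation of the local integer coefficient via Normalization. This is the right architecture.

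Two comments on the details. First, in Step~3 you should be explicit that the straight-line homotopy in a chart from $(f',g')$ to the model $(c,\text{linear embedding})$ keeps the coincidence set inside a compact subset of $U_i$; transversality at $x_i$ gives this, but it is the point where admissibility could fail and deserves a sentence. Second, your handling of the orientation-reversing case is correct in spirit but the cleanest execution (as in \cite{s1,s2,gs}) is to take a small disc with a single orientation-preserving transverse coincidence, apply a reflection in one coordinate to produce an orientation-reversing model, and exhibit an admissible homotopy of the \emph{pair} of these two models (placed in disjoint sub-discs) to a coincidence-free pair; Additivity plus the empty-set consequence of Additivity then forces the two contributions to cancel, yielding $-1$ for the reversing case. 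Your description gestures at this but leaves the construction of the auxiliary pair vague; spelling it out is where the actual work lies.
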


Our averaging formula for the local coincidence Reidemeister trace is proved similarly to Theorem \ref{indexaverage}.

\begin{Thm}
For any $U \subset M_1$, let $\bar U = \bar p_1^{-1}(U) \subset \bar M_1$ and let $\bar f, \bar g$ be specific chosen lifts of $f,g$. Then we have
\[
RT(f,\tilde{f},g,\tilde{g};U)=\frac{1}{[\Pi_1:\Gamma_1]}
\sum_{\bar \beta \in \Pi_2/\Gamma_2} \rho_\beta\circ\hat{i}_2^\beta\left(RT(\bar\beta\bar{f},\beta\tilde{f},\bar{g},\tilde{g};\bar U)\right),
\]
where $\beta \in \Pi_2$ is any element whose image in $\Pi_2/\Gamma_2$ is $\bar \beta$.
\end{Thm}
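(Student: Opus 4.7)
The plan is to follow the template of the proof of Theorem~\ref{indexaverage}, using Theorem~\ref{rtuniqueness} as the uniqueness tool. Define, for each admissible tuple $(f,\tilde f,g,\tilde g;U)\in\calC(M_1,M_2)$,
\[
\sigma(f,\tilde f,g,\tilde g;U):=\frac{1}{[\Pi_1:\Gamma_1]}\sum_{\bar\beta\in\Pi_2/\Gamma_2}\rho_\beta\circ\hat i_2^\beta\bigl(RT(\bar\beta\bar f,\beta\tilde f,\bar g,\tilde g;\bar U)\bigr),
\]
where $\bar f,\bar g$ are the lifts of $f,g$ induced by $\tilde f,\tilde g$ and $\beta\in u_2^{-1}(\bar\beta)$ is an arbitrary chosen preimage. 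One first checks that $\sigma$ is independent of the choice of $\beta$: replacing $\beta$ by $\gamma\beta$ with $\gamma\in\Gamma_2$ shifts Reidemeister representatives inside the inner $RT$ (the coincidence analogue of \eqref{coinsame}), and this shift is absorbed by the corresponding change of $\rho_\beta\circ\hat i_2^\beta$. Since the target of $\rho_\beta\circ\hat i_2^\beta$ is $\mathbb Z\calR[\phi,\psi]$, it suffices to verify the five axioms of Theorem~\ref{rtuniqueness}.

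Homotopy, additivity, lift invariance, and coincidence of lifts each reduce to the corresponding axiom applied to the inner local $RT$. For homotopy, an admissible pair of homotopies on $U$ lifts uniquely to an admissible pair on $\bar U$ (admissibility is preserved since $\bar p_1$ is a finite cover, so compact coincidence sets have compact preimage), and applying inner homotopy invariance summand by summand gives the result. For additivity, $\bar p_1^{-1}(U_1)\sqcup\bar p_1^{-1}(U_2)$ contains $\Coin(\bar\beta\bar f,\bar g)\cap\bar U$, so inner additivity transfers directly. For lift invariance, the substitution $\bar\beta\mapsto\bar\beta\bar\alpha^{-1}$ in the summation absorbs the change $\tilde f\mapsto\alpha\tilde f$ (and similarly for $\tilde g$), after which inner lift invariance leaves $\epsilon(\sigma)$ unchanged. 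For coincidence of lifts, a nonzero coefficient of $[\alpha]_{\phi,\psi}$ in $\sigma$ forces some summand to contain a nonzero coefficient of some $[\gamma']_{\tau_\beta\phi',\psi'}$ with $\rho_\beta\hat i_2^\beta([\gamma'])=[\gamma'\beta]=[\alpha]$; the inner coincidence-of-lifts axiom produces a coincidence of $\gamma'\beta\tilde f$ with $\tilde g$ in $p_1'^{-1}(\bar U)=\tilde U$, which translates by a deck transformation to a coincidence of $\alpha\tilde f$ with $\tilde g$ in $\tilde U$.

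The main obstacle is the normalization axiom. Let $c:M_1\to M_2$ be constant with value $c_0$, and $g:U\to M_2$ an orientation-preserving embedding with $g(x_0)=c_0$; choose a lift $\tilde c$ of value $\tilde c_0$ and any lift $\tilde g$. Writing $\bar c_0,\bar g(\bar x_0)\in\bar M_2$ for the induced points, there is a unique $\bar\delta_0\in\Pi_2/\Gamma_2$ with $\bar g(\bar x_0)=\bar\delta_0\bar c_0$. A coincidence of $(\bar\beta\bar c,\bar g)$ in $\bar U$ must project under $\bar p_1$ to $x_0$, hence is of the form $\bar\gamma\bar x_0$ for some $\bar\gamma\in\Pi_1/\Gamma_1$, and such a point is a coincidence iff $\bar\psi(\bar\gamma)\bar\delta_0=\bar\beta$. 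Each such coincidence contributes index $+1$, since $\bar g$ restricted to each sheet of $\bar U$ is an orientation-preserving embedding while $\bar\beta\bar c$ is constant. Summing over $\bar\beta\in\Pi_2/\Gamma_2$, every $\bar\gamma\in\Pi_1/\Gamma_1$ contributes a single coincidence (to the unique $\bar\beta=\bar\psi(\bar\gamma)\bar\delta_0$), so
\[
\sum_{\bar\beta\in\Pi_2/\Gamma_2}\epsilon\bigl(RT(\bar\beta\bar c,\beta\tilde c,\bar g,\tilde g;\bar U)\bigr)=\#(\Pi_1/\Gamma_1)=[\Pi_1:\Gamma_1],
\]
and dividing by $[\Pi_1:\Gamma_1]$ yields $\epsilon(\sigma(c,\tilde c,g,\tilde g;U))=1$. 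With all five axioms verified, Theorem~\ref{rtuniqueness} identifies $\sigma$ with the local coincidence Reidemeister trace, proving the formula.
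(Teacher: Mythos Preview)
Your proposal is correct and follows essentially the same axiomatic approach as the paper: define $\sigma$ as the right-hand side, check well-definedness with respect to the choice of $\beta$, and verify the five axioms of Theorem~\ref{rtuniqueness}. The only notable difference is in the normalization count: you sum over $\bar\gamma\in\Pi_1/\Gamma_1$ first, observing that each $\bar\gamma$ contributes exactly once (to the unique $\bar\beta=\bar\psi(\bar\gamma)\bar\delta_0$), whereas the paper sums over $\bar\beta$ first and evaluates $\sum_{\bar\beta}\#\bar\psi^{-1}(\bar\beta\bar\alpha^{-1})=\#\im\bar\psi\cdot\#\ker\bar\psi$ via the First Isomorphism Theorem---but this is the same double sum evaluated in the other order.
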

\begin{proof}
We will follow the proof of Theorem \ref{indexaverage}. Throughout the proof, for $U \subset M_1$, let $\bar U = \bar p_1^{-1}(U) \subset \bar M_1$. For any tuple $(f,\tilde f, g, \tilde g) \in \calC(M_1,M_2)$, let $\bar f ,\bar g: \bar M_1 \to \bar M_2$ be given by the diagram \eqref{coinCD}. 
Let:
\begin{equation} \label{sigmadef} 
\sigma(f,\tilde f, g, \tilde g; U) = \frac{1}{[\Pi_1:\Gamma_1]}
\sum_{\bar \beta \in \Pi_2/\Gamma_2} \rho_\beta\circ\hat{i}_2^\beta\left(RT(\bar\beta\bar{f},\beta\tilde{f},\bar{g},\tilde{g};\bar U)\right), 
\end{equation}
where $\bar f,\bar g$ are any fixed lifts of $f$ and $g$, 
and we will show that $\sigma$ satisfies the 5 axioms of Theorem \ref{rtuniqueness}. 

First we show that $\sigma$ is well defined, that is, independent of the element $\beta \in \Pi_2$ which projects to $\bar \beta \in \Pi_2/\Gamma_2$. 
It suffices to show that if $\beta$ and $\beta'$ each project to $\bar \beta$, then 
\[
\rho_\beta \circ \hat i_2^\beta (RT(\bar \beta \bar f, \beta \tilde f, \bar g, \tilde g; \bar U)) = \rho_{\beta'} \circ \hat i_2^{\beta'} (RT(\bar \beta \bar f, \beta' \tilde f, \bar g, \tilde g; \bar U)). 
\]

Let $\gamma\in \Gamma_2$ be some element with $ \gamma\beta = \beta'$. Then we must show
\[
\rho_\beta \circ \hat i_2^\beta (RT(\bar \beta \bar f, \beta \tilde f, \bar g, \tilde g; \bar U)) = \rho_{\gamma\beta} \circ \hat i_2^{\gamma\beta} (RT(\bar \beta \bar f, (\gamma\beta)  \tilde f, \bar g, \tilde g; \bar U)),
\]
This follows from the following observation: For $\beta\in\Pi_2$ and $\gamma\in\Gamma_2$, we have the following commutative diagram
$$
\rho_{\gamma\beta}\circ\hat\iota_2^{\gamma\beta}=\rho_\beta\circ\hat\iota_2^\beta\circ\rho_\gamma
$$
$$
\CD
\calR[\tau_{\gamma\beta}\phi',\psi']@>{\hat\iota_2^{\gamma\beta}}>>
\calR[\tau_{\gamma\beta}\phi,\psi]@>{\rho_{\gamma\beta}}>>\calR[\phi,\psi]\\
@VV{=}V@.@AA{\rho_\beta}A\\
\calR[\tau_{\gamma}(\tau_{\beta}\phi)',\psi']@>{\rho_\gamma}>>
\calR[\tau_{\beta}\phi',\psi']@>{\hat\iota_2^\beta}>>\calR[\tau_\beta\phi,\psi]
\endCD
$$
$$
\forall\delta\in\Gamma_2,\quad
\CD
[\delta]@>{\hat\iota_2^{\gamma\beta}}>>[\delta]
@>{\rho_{\gamma\beta}}>>[\delta(\gamma\beta)]=[(\delta\gamma)\beta]\\
@VV{=}V@.@AA{\rho_\beta}A\\
[\delta]@>{\rho_\gamma}>>[\delta\gamma]
@>{\hat\iota_2^\beta}>>[\delta\gamma]
\endCD
$$

Then we have 
\begin{align*} 
\rho_{\gamma\beta} \circ \hat i_2^{\gamma\beta} (RT(\bar \beta \bar f, (\gamma\beta)  \tilde f, \bar g, \tilde g; \bar U)) 
&= \rho_{\beta} \circ \hat i_2^{\beta} \circ \rho_\gamma (RT(\bar \beta \bar f, (\gamma\beta)  \tilde f, \bar g, \tilde g; \bar U)) \\
&= \rho_{\beta} \circ \hat i_2^{\beta} (RT(\bar \beta \bar f, \beta\tilde f, \bar g, \tilde g; \bar U)),
\end{align*}
as required, where the last equality is by \eqref{coinsame}, which holds for the local coincidence Reidemeister trace (this is \cite[Theorem 6]{s2}).

Now we verify that our five axioms hold for $\sigma$.
The homotopy axiom is satisfied exactly as in Theorem \ref{indexaverage}. Any pair of admissible homotopies $(H,G)$ from $f$ to $f'$ and $g$ to $g'$ will uniquely determine a pair of admissible homotopies $(\bar \beta \bar H, \bar G)$ from $\bar \beta \bar f$ to $\bar \beta \bar f'$ and $\bar g$ to $\bar g'$, and also a pair of admissible homotopies from $\tilde f$ to $\tilde f'$ and $\tilde g$ to $\tilde g'$. Then by the homotopy property for the Reidemeister trace we will have:
\begin{align*} 
\sigma(f,\tilde f, g, \tilde g; U) &= \frac{1}{[\Pi_1:\Gamma_1]}
\sum_{\bar \beta \in \Pi_2/\Gamma_2} \rho_\beta\circ\hat{i}_2^\beta\left(RT(\bar\beta\bar{f},\beta\tilde{f},\bar{g},\tilde{g};\bar U)\right) \\
&= \frac{1}{[\Pi_1:\Gamma_1]}
\sum_{\bar \beta \in \Pi_2/\Gamma_2} \rho_\beta\circ\hat{i}_2^\beta\left(RT(\bar\beta\bar{f}',\beta\tilde{f'},\bar{g}',\tilde{g}';\bar U)\right) \\
&= \sigma(f',\tilde f', g', \tilde g';U)
\end{align*}

For the additivity axiom again we follow exactly the argument from Theorem \ref{indexaverage}. We omit the details. 


For the normalization axiom, let $c$ be a constant map and $g$ be an orientation preserving embedding of some small set $U$ with $g(x)=c$ for some $x\in U$. Then there is some $\bar \alpha \in \Pi_2/\Gamma_2$ with $\bar g(\bar x) = \bar \alpha \bar c$. 

We will assume that $U$ is sufficiently small so that $\bar U$ is a union of finitely many components homeomorphic to $U$ by $p_1$. Let $\bar U_0$ be the connected subset of $\bar U$ which contains $\bar x$. Then we have a disjoint union 
\[ \bar U = \bigsqcup_{\bar\gamma  \in \Pi_1/\Gamma_1} \bar\gamma(\bar U_0),\]
and
\begin{align*}
\epsilon(\sigma(c,\tilde c, g, \tilde g; U)) &=  \epsilon\left( \frac{1}{[\Pi_1:\Gamma_1]}
\sum_{\bar \beta \in \Pi_2/\Gamma_2} \rho_\beta\circ\hat{i}_2^\beta\left(RT(\bar\beta\bar{c},\beta\tilde{c},\bar{g},\tilde{g};\bar U)\right)\right) \\
&=   \frac{1}{[\Pi_1:\Gamma_1]}
\sum_{\bar \beta \in \Pi_2/\Gamma_2} \epsilon \left(RT(\bar\beta\bar{c},\beta\tilde{c},\bar{g},\tilde{g};\bar U)\right) \\
&= \frac{1}{[\Pi_1:\Gamma_1]}
\sum_{\bar \beta \in \Pi_2/\Gamma_2} \sum_{\bar \gamma\in \Pi_1/\Gamma_1} \epsilon \left(RT(\bar\beta\bar{c},\beta\tilde{c},\bar{g},\tilde{g};\bar \gamma (\bar U_0))\right).
\end{align*}

The set $\Coin(\bar\beta\bar c, \bar g)\cap \bar \gamma (\bar U_0)$ is nonempty when $\bar g(\bar \gamma \bar x) = \bar \beta \bar c$. Since $\bar g(\bar x) = \bar \alpha \bar c$, this is equivalent to $\bar \psi(\bar \gamma)\bar g(\bar x) = \bar\beta \bar \alpha^{-1} \bar g(\bar x)$, which is to say that $\bar \psi(\bar \gamma) = \bar\beta \bar \alpha^{-1}$.

In the case where $\Coin(\bar\beta\bar c, \bar g)\cap \bar \gamma (\bar U_0)$ is nonempty, it contains a single coincidence point $\bar\gamma\bar x$, and $\bar g$ is an orientation preserving embedding of $\bar \gamma (\bar U_0)$, because covering transformations of covering spaces of orientable manifolds are orientation preserving. Thus by the normalization property for $RT$ we have:
\[ \epsilon \left(RT(\bar\beta\bar{c},\beta\tilde{c},\bar{g},\tilde{g};\bar \gamma (\bar U_0))\right) = \begin{cases} 1 & \text{ if } \bar \psi(\bar \gamma) = \bar\beta \bar \alpha^{-1}, \\
0 & \text{ otherwise.}
\end{cases} \]

Then the above summation gives:
\begin{align*}
\epsilon(\sigma(c,\tilde c, g, \tilde g; U)) &= \frac{1}{[\Pi_1:\Gamma_1]}
\sum_{\bar \beta \in \Pi_2/\Gamma_2} \#\{\bar \gamma \in \Pi_1/\Gamma_1 \mid \bar \psi(\bar \gamma) = \bar\beta \bar \alpha^{-1}\} \\
&= \frac{1}{[\Pi_1:\Gamma_1]}
\sum_{\bar \beta \in \Pi_2/\Gamma_2} \# \bar\psi^{-1}(\{\bar\beta\bar\alpha^{-1}\}) \\
&= \frac{1}{[\Pi_1:\Gamma_1]}
\sum_{\bar \delta \in \Pi_2/\Gamma_2} \# \bar\psi^{-1}(\{\delta\}).
\end{align*}

The set inside the sum has 0 elements when $\delta \in \im\bar\psi$, and otherwise has size $\#\ker\bar\psi$. Thus we have:
\begin{align*}
\epsilon(\sigma(c,\tilde c, g, \tilde g; U)) &= \frac{1}{[\Pi_1:\Gamma_1]}
\#\im\bar\psi \cdot \#\ker\bar\psi = 1
\end{align*}
by the First Isomorphism Theorem.


For the lift invariance axiom, we have
\begin{align*}
\epsilon(\sigma(f,\alpha \tilde f, g, \gamma \tilde g;U)) &=  \epsilon\left( \frac{1}{[\Pi_1:\Gamma_1]}
\sum_{\bar \beta \in \Pi_2/\Gamma_2} \rho_\beta\circ\hat{i}_2^\beta\left(RT(\bar\beta\bar{f},\beta\alpha\tilde{f},\bar{g},\gamma \tilde{g};\bar U)\right)\right) \\
&= \frac{1}{[\Pi_1:\Gamma_1]}
\sum_{\bar \beta \in \Pi_2/\Gamma_2} \epsilon \left(RT(\bar\beta\bar{f},\beta\alpha\tilde{f},\bar{g},\gamma \tilde{g};\bar U)\right) \\
&= \frac{1}{[\Pi_1:\Gamma_1]}
\sum_{\bar \beta \in \Pi_2/\Gamma_2} \epsilon \left(RT(\bar\beta\bar{f},\beta\tilde{f},\bar{g}, \tilde{g};\bar U)\right) \\
&= \epsilon(\sigma(f, \tilde f, g,  \tilde g; U)).
\end{align*}

Finally we must demonstrate the coincidence of lifts axiom. Say that $[\alpha]$ appears in $\sigma(f,\tilde f, g, \tilde g; U)$ with nonzero coefficient, and we must show that $\alpha \tilde f$ and $\tilde g$ have a coincidence in $p_1^{-1}(U)$. By \eqref{sigmadef}, if $[\alpha]$ has nonzero coefficient in $\sigma(f,\tilde f, g, \tilde g; U)$, then there is some $\bar \beta$ and some $\gamma$ such that $\rho_\beta(\hat i_2^\beta([\gamma])) = [\alpha]$, and $[\gamma]$ has nonzero coefficient in $RT(\bar \beta \bar f, \beta\tilde f, \bar g, \tilde g; \bar U)$. 

Then by the coincidence of lifts property applied to $RT(\bar \beta \bar f, \beta\tilde f, \bar g, \tilde g; \bar U)$, the maps $\gamma \beta \tilde f$ and $\tilde g$ have a coincidence on $\bar U$. But since $\rho_\beta(\hat i_2^\beta([\gamma])) = [\alpha]$, we have $\alpha = \gamma \beta$, and thus $\alpha \tilde f$ and $\tilde g$ have a coincidence on $\bar U$ as desired.

We have shown that $\sigma$ satisfies the 5 axioms of Theorem \ref{rtuniqueness}, and thus it must equal the coincidence Reidmemeister trace.
\end{proof}

\section{Examples}

\begin{Example}
Let $M_1$ be a 3-dimensional orientable flat manifold $\Pi_1\bs\bbr^3$
where $\Pi_1$ is the 3-dimensional orientable Bieberbach group $\frakG_2$ \cite[Theorem~3.5.5]{Wolf}:
$$
\Pi_1=\langle t_1,t_2,t_3,\alpha\mid [t_i,t_j]=1,\
\alpha^2=t_1,\
\alpha t_2\alpha^{-1}=t_2^{-1},\
\alpha t_3\alpha^{_1}=t_3^{-1} \rangle.
$$
We can embed this group into $\aff(\bbr^3)$ by taking $\{\bfe_1, \bfe_2, \bfe_3\}$
as the standard basis for $\bbr^3$ and
$$
t_i=(\bfe_i,I_3)\ (i=1,2,3),\quad
\alpha=\left(\left(\begin{matrix}\frac{1}{2}\\0\\0\end{matrix}\right),
\left(\begin{matrix}1&\hspace{8pt}0&\hspace{8pt}0\\0&-1&\hspace{8pt}0\\0&\hspace{8pt}0&-1\end{matrix}\right)\right).
$$
Then the translation lattice is
$$
\Gamma_1=\Pi_1\cap\bbr^3=\langle t_1,t_2,t_3\rangle=\langle \bfe_1,\bfe_2,\bfe_3\rangle=\bbz^3
$$
and the holonomy group is $\Pi_1/\Gamma_1\cong\bbz_2$.

Let $M_2=\bbr{P}^3$.
Assume that there is a commutative diagram
$$
\CD
\bbr^3@>{\tilde{f}}>{\tilde{g}}>S^3\\
@VV{p_1'}V@VV{p_2'}V\\
T^3=\Gamma_1\bs\bbr^3@>{\bar{f}}>{\bar{g}}>S^3\\
@VV{\bar{p}_1}V@VV{\bar{p}_2}V\\
M_1=\Pi_1\bs\bbr^3@>{f}>{g}>M_2=\bbr{P}^3
\endCD
$$
inducing the following commutative diagram
\begin{align*}
\CD
1@>>>\Gamma_1@>{i_1}>>\Pi_1@>{u_1}>>\Pi_1/\Gamma_1\cong\bbz_2@>>>1\\
@.@V{\tau_{\beta}\phi'}V{\psi'}V@V{\tau_{\beta}\phi}V{\psi}V@V{\tau_{\bar\beta}\bar\phi}V{\bar{\psi}}V\\
1@>>>\Gamma_2=\{1\}@>{i_2}>>\Pi_2@>{u_2}>>\Pi_2/\Gamma_2\cong\bbz_2@>>>1
\endCD
\end{align*}
where $\bar\phi$ is an isomorphism and $\bar\psi$ is a trivial homomorphism.

Let $\beta$ be the antipodal map of $S^3$.
Then $\Pi_2=\langle \beta\rangle$.
Since $\Pi_2$ is abelian, we have $\tau_\beta\phi=\phi$ and $\tau_{\bar\beta}\bar\phi=\bar\phi$.

Remark that since $\psi'$ and $\bar\psi$ are trivial homomorphisms,
it follows that $\psi$ is a trivial homomorphism.
Since $\phi'$ is a trivial homomorphism and $\bar\phi$ is an isomorphism,
it follows that $\phi(\alpha)=\beta$.

A simple computation shows that
\begin{align*}
1&\to\coin(\phi',\psi')=\Gamma_1\to\coin(\phi,\psi)=\Gamma_1\to\coin(\bar\phi,\bar\psi)=\{\bar{1}\}\\
&\to\calR[\phi',\psi']=\{[1]\}\to\calR[\phi,\psi]=\{[1]=[\beta]\}\to\calR[\bar\phi,\bar\psi]=\{[\bar{1}]=[\bar\beta]\}\to1.
\end{align*}
There is only one coincidence class of $(f,g)$:
$$
\bbs_{[1]}=p_1\!\left(\Coin(\tilde{f},\tilde{g})\right).
$$
Hence
$$
RT(f,\tilde{f},g,\tilde{g})=\ind(f,g;\bbs_{[1]})[1].
$$

On the other hand, because $\tau_\beta\phi=\phi$, we have
\begin{align*}
RT(\bar\beta\bar{f},\beta\tilde{f},\bar{g},\tilde{g})
=RT(\bar{f},\tilde{f},\bar{g},\tilde{g})
=\ind(\bar{f},\bar{g};\bbs_{[1]})[1] \in\bbz\calR[\phi',\psi'].
\end{align*}
Thus
\begin{align*}
\frac{1}{[\Pi_1:\Gamma_1]}&\sum_{\bar\beta\in\Pi_2/\Gamma_2}\rho_\beta\circ\hat{i}^\beta\left(RT(\bar\beta\bar{f},\beta\tilde{f}, \bar g, \tilde g)\right)\\
&=\frac{1}{2}
\left(\rho_1\circ\hat{i}_2^{1}\left(RT(\bar{f},\tilde{f},\bar{g},\tilde{g})\right)
+\rho_\beta\circ\hat{i}_2^{\beta}\left(RT(\bar\beta\bar{f},\beta\tilde{f},\bar{g},\tilde{g})\right)\right)\\
&=\frac{1}{2}
\left(\rho_1\circ\hat{i}_2^{1}\left(\ind(\bar{f},\bar{g};\bbs_{[1]})[1]\right)
+\rho_\beta\circ\hat{i}_2^{\beta}\left(\ind(\bar{f},\bar{g};\bbs_{[1]})[1]\right)\right)\\
&=\ind({f},{g};\bbs_{[1]})[1].
\end{align*}

This example demonstrates how the averaging formula for the coincidence Reidemeister trace holds.
Notice also in this example that the averaging formula for the Nielsen coincidence number also holds.
This is expected from from \cite[Theorem~4.6]{KL} since $\coin(\phi,\psi)\subset\Gamma_1$.
\end{Example}

\begin{Example}
Let $M_1=\bbr{P}^3$.
Let $M_2$ be a 3-dimensional orientable flat manifold $\Pi_2\bs\bbr^3$
where $\Pi_2$ is the 3-dimensional orientable Bieberbach group $\frakG_2$:
$$
\Pi_2=\langle t_1,t_2,t_3,\alpha\mid [t_i,t_j]=1,\
\alpha^2=t_1,\
\alpha t_2\alpha^{-1}=t_2^{-1},\
\alpha t_3\alpha^{_1}=t_3^{-1} \rangle.
$$
Let $\Gamma_2=\langle t_1, t_2, t_3 \rangle=\bbz^3$.

Assume that there is a commutative diagram
$$
\CD
S^3@>{\tilde{f}}>{\tilde{g}}>\bbr^3\\
@VV{p_1'}V@VV{p_2'}V\\
S^3@>{\bar{f}}>{\bar{g}}>T^3=\Gamma_2\bs\bbr^3\\
@VV{\bar{p}_1}V@VV{\bar{p}_2}V\\
M_1=\bbr{P}^3@>{f}>{g}>M_2=\Pi_2\bs\bbr^3
\endCD
$$
inducing the following commutative diagram
\begin{align*}
\CD
1@>>>\Gamma_1=\{1\}@>{i_1}>>\Pi_1@>{u_1}>>\Pi_1/\Gamma_1\cong\bbz_2@>>>1\\
@.@V{\tau_{\gamma}\phi'}V{\psi'}V@V{\tau_{\gamma}\phi}V{\psi}V@V{\tau_{\bar\gamma}\bar\phi}V{\bar{\psi}}V\\
1@>>>\Gamma_2@>{i_2}>>\Pi_2@>{u_2}>>\Pi_2/\Gamma_2\cong\bbz_2@>>>1
\endCD
\end{align*}

Let $\beta$ be the antipodal map of $S^3$.
Then $\Pi_1=\langle \beta\rangle$.
Notice that every homomorphism $\xi:\Pi_1\to\Pi_2$ must be a trivial homomorphism.
Otherwise, $\xi(\beta)$  is of the form
$$
\xi(\beta)=t_1^{n_1}t_2^{n_2}t_3^{n_3}\alpha.
$$
Then
$1=\xi(\beta^2)=\xi(\beta)^2=(t_1^{n_1}t_2^{n_2}t_3^{n_3}\alpha)^2=t_1^{2n_1+1}$,
which is impossible.
Thus $\tau_\gamma\phi$ and $\psi$ are trivial, and the remaining homomorphisms are also trivial.

A simple computation shows that
\begin{align*}
1&\to\coin(\phi',\psi')=\Gamma_1\to\coin(\phi,\psi)=\Pi_1\to\coin(\bar\phi,\bar\psi)=\Pi_1/\Gamma_1\\
&\to\calR[\phi',\psi']=\Gamma_2\to\calR[\phi,\psi]=\Pi_2\to\calR[\bar\phi,\bar\psi]=\Pi_2/\Gamma_2\to1.
\end{align*}
The coincidence classes of $(f,g)$ are
$$
\bbs_{\gamma}=p_1\!\left(\Coin(\gamma\tilde{f},\tilde{g})\right),\quad \forall\gamma\in\Pi_2.
$$
Hence
$$
RT(f,\tilde{f},g,\tilde{g})=\sum_{\gamma\in\Pi_2}\ind(f,g;\bbs_{\gamma})\gamma\in\bbz\Pi_2.
$$

On the other hand,
\begin{align*}
\frac{1}{[\Pi_1:\Gamma_1]}&\sum_{\bar\gamma\in\Pi_2/\Gamma_2}\rho_\gamma\circ\hat{i}^\gamma\left(RT(\bar\gamma\bar{f},\gamma\tilde{f},\bar g, \tilde g)\right)\\
&=\frac{1}{2}
\left(\rho_1\circ\hat{i}_2^{1}\left(RT(\bar{f},\tilde{f},\bar{g},\tilde{g})\right)
+\rho_\alpha\circ\hat{i}_2^{\alpha}\left(RT(\bar\alpha\bar{f},\alpha\tilde{f},\bar{g},\tilde{g})\right)\right)\\
&=\frac{1}{2}
\left(\rho_1\circ\hat{i}_2^{1}\left(\sum_{\gamma\in\Gamma_2}\ind(\bar{f},\bar{g};\bbs_{\gamma})\gamma\right)
+\rho_\alpha\circ\hat{i}_2^{\alpha}\left(\sum_{\gamma\in\Gamma_2}\ind(\bar\alpha\bar{f},\bar{g};\bbs_{\gamma})\gamma\right)\right)\\
&=
\sum_{\gamma\in\Gamma_2}\frac{1}{2}\ind(\bar{f},\bar{g};p_1'\!\left(\Coin(\gamma\tilde{f},\tilde{g})\right)\gamma
+\sum_{\gamma\in\Gamma_2}\frac{1}{2}\ind(\bar\alpha\bar{f},\bar{g};p_1'\!\left(\Coin(\gamma\alpha\tilde{f},\tilde{g})\right)\gamma\alpha\\
&=
\sum_{\gamma\in\Gamma_2}\ind({f},{g};p_1\!\left(\Coin(\gamma\tilde{f},\tilde{g})\right)\gamma
+\sum_{\gamma\in\Gamma_2}\ind({f},{g};p_1\!\left(\Coin(\gamma\alpha\tilde{f},\tilde{g})\right)\gamma\alpha.
\end{align*}

Consequently, we have
$$
RT(f,\tilde{f},g,\tilde{g})=\frac{1}{[\Pi_1:\Gamma_1]}\sum_{\bar\gamma\in\Pi_2/\Gamma_2}\rho_\gamma\circ\hat{i}^\gamma\left(RT(\bar\gamma\bar{f},\gamma\tilde{f})\right).
$$
\end{Example}

\section{Open questions in the nonorientable case}

Nielsen coincidence theory for nonorientable manifolds of the same dimension is generally defined in terms of the ``semi-index'' of Dobrenko and Jezierski \cite{dj}. We denote the semi-index of a coincidence class $\bbs$ by $|\ind|(f,g;\bbs)$. The semi-index can be used to define a coincidence Reidemeister trace as in \eqref{RT0}.
\[ |RT|(f,\tilde f,g, \tilde g) = \sum_{[\beta] \in \calR[\phi,\psi]} |\ind|(f, g; \bbs_{[\beta]}) [\beta] \in \bbz\calR[\phi,\psi]. \]

When the manifolds $M_1$ and $M_2$ are orientable, it is well known that $|\ind|(f,g;\bbs) = |\ind(f,g;\bbs)|$, the absolute value of the classical coincidence index. By the definition of $|RT|$, we will have the similar formula
\[ |RT|(f,\tilde f,g,\tilde g) = |RT(f,\tilde f,g,\tilde g)|, \]
where the absolute values on the right denote taking absolute values of each integer coefficient, after grouping together all terms.

It is natural to ask whether an averaging formula will hold for this semi-index Reidemeister trace. 
\begin{question}
Does the following formula hold?
\[ 
|RT|(f,\tilde{f},g,\tilde{g})=\frac{1}{[\Pi_1:\Gamma_1]}
\sum_{\bar\beta\in\Pi_2/\Gamma_2} \rho_\beta\circ\hat{i}_2^\beta\left(|RT|(\bar\beta\bar{f},\beta\tilde{f},\bar{g},\tilde{g})\right)
\]
\end{question}
This formula will hold when the manifolds are orientable- it is not clear if the formula holds when the manifolds are nonorientable. In the following example, the averaging formula does hold.

\begin{Example}
Consider 
$$
\CD
S^2@>{\bar{f}}>>S^2\\
@VV{{p}}V@VV{{p}}V\\
\bbr{P}^2@>{f}>>\bbr{P}^2
\endCD
$$
with $f=\id_{\bbr{P}^2}$ and $\tilde{f}=\id_{S^2}$.
Then the diagram induces the identity endomorphism $\phi$ of the group $\Pi=\{1,\alpha\}$ of transformations of $p$.
Observe that
\begin{itemize}
\item $\calR[\phi,\phi]=\{[1],[\alpha]\}$,
\item $p\left(\Coin(\tilde{f},\tilde{f})\right)=\bbr{P}^2$
and $p\left(\Coin(\alpha\tilde{f},\tilde{f})\right)=\emptyset$,
\item $|\ind|(f,f; p\left(\Coin(\tilde{f},\tilde{f})\right))=1$\newline
(by changing one $f$ to be a small rotation), and\newline 
$|\ind|(f,f; p\left(\Coin(\alpha\tilde{f},\tilde{f})\right))=0.$
\end{itemize}
Then we have
\begin{align*}
|RT|(f,\tilde{f},f,\tilde{f})&=|\ind|(f,f; p(\Coin(\tilde{f},\tilde{f})))[1]
+|\ind|(f,f; p(\Coin(\alpha\tilde{f},\tilde{f})))[\alpha]\\
&=1[1]+0[\alpha]=1[1]\in\bbz\calR[\phi,\phi],\\
N(f,f)&=1.
\end{align*}
On the other hand we can compute $|RT|(\tilde{f},\tilde{f}, \tilde{f},\tilde{f})$ and $|RT|(\alpha\tilde{f},\alpha\tilde{f}, \tilde{f},\tilde{f})$.
Because the manifold involved is an orientable manifold $S^2$, $|RT|$ is the absolute value of the classical $RT$.
For the computation, we remark that $\Gamma=\{1\}$ and so $\phi'$ is trivial, hence $\bar\Pi=\Pi$ and $\bar\phi=\phi$. Indeed, we have
\begin{align*}
|RT|(\tilde{f},\tilde{f}, \tilde{f},\tilde{f})
&=|\ind(\tilde{f},\tilde{f};S^2)|[1]=2[1]\in\bbz\calR[\phi',\phi'],\\
|RT|(\alpha\tilde{f},\alpha\tilde{f}, \tilde{f},\tilde{f})
&=|\ind(\alpha\tilde{f},\tilde{f};\emptyset)|[1]=0[1]\in\bbz\calR[\tau_\alpha\phi',\phi'],
\end{align*}
hence 
\begin{align*}
\frac{1}{2}\left\{
\rho_1\circ\hat\iota_2^1(|RT|(\tilde{f},\tilde{f}, \tilde{f},\tilde{f}))
+\rho_\alpha\circ\hat\iota_2^\alpha(|RT|(\alpha\tilde{f},\alpha\tilde{f}, \tilde{f},\tilde{f}))\right\}
=\frac{1}{2}\left\{2[1]+0[1]\right\}.
\end{align*}
Consequently, we have 
$$
|RT|(f,\tilde{f},f,\tilde{f})
=
\frac{1}{2}\left\{
\rho_1\circ\hat\iota_2^1(|RT|(\tilde{f},\tilde{f}, \tilde{f},\tilde{f}))
+\rho_\alpha\circ\hat\iota_2^\alpha(|RT|(\alpha\tilde{f},\alpha\tilde{f}, \tilde{f},\tilde{f}))\right\}.
$$
Note that in this example the semi-index Nielsen number does not average:
\[
N(f,f)=1\ne \frac{1}{2}(1+0)=\frac{1}{2}(N(\tilde{f},\tilde{f})+N(\alpha\tilde{f},\tilde{f})).
\]
This was expected already by \cite[Theorem~4.5]{KL} as $\coin(\phi,\phi)=\Pi\not\subset\Gamma$.
\end{Example}

The real issue in deriving an averaging formula for the semi-index Reidemeister trace seems to be the semi-index itself. The following question seems likely to be easier, but we still do not have an answer:
\begin{question}
When $M_1$ and $M_2$ are possibly nonorientable, does the following formula hold?
\begin{equation}\label{semiindexavg}
|\ind|(f,g;\bbs) = \frac{1}{[\Pi_1:\Gamma_1]} \sum_{\bar \beta\in\Pi_2 / \Gamma_2} |\ind|(\bar \beta f, \bar g; \bar \bbs) 
\end{equation}
\end{question}

The axiomatic approach is not likely to succeed in answering these questions, since there is no axiomatic formulation of the semi-index. In particular the semi-index is defined only for a coincidence class, not an arbitrary (compact) coincidence set. Therefore the semi-index will not have an additivity property like the one used in \cite{fps,s1,s2,gs}. 

A nonorientable version of Theorem \ref{rtuniqueness} is proved in \cite{gs} using an index with values in $\bbz \oplus \bbz_2$ which is closely related to the semi-index. Roughly, the $\bbz$-part of the index is analogous to the classical index of nondegenerate coincidence points, while the $\bbz_2$-part carries information about any degenerate coincidences. The $\bbz\oplus \bbz_2$-index of a coincidence class is zero if and only if the semi-index is zero, so a Nielsen number using the $\bbz\oplus \bbz_2$-index will agree with the traditional semi-index Nielsen number.
Unlike the semi-index, the $\bbz\oplus \bbz_2$ index can be defined for any coincidence set, and has the appropriate additivity property.

The fact that the values are not integers introduces new complications for a formulation of an averaging formula like \eqref{semiindexavg} for the $\bbz \oplus \bbz_2$-index. In particular the factor of $1/{[\Pi_1:\Gamma_1]}$ needs to be reinterpreted if the sum gives a non-integer value, and also some adjustment must be made for the fact that when the covering spaces are all orientable, the index of all lifts will have values in $\bbz$, while the index of the maps themselves may still have nontrivial $\bbz_2$ part.

Still, the axioms for the $\bbz\oplus\bbz_2$-index may provide a simple way to prove an averaging formula, if it can be properly formulated. Therefore we ask:
\begin{question}
Is there an averaging formula for the $\bbz\oplus \bbz_2$-index analogous to \eqref{semiindexavg}?
\end{question}

\end{document}